\title{Comparison of Matrix Norm Sparsification%
  \thanks{Work partially supported by ONR Award N00014-18-1-2364,
  the Israel Science Foundation grant \#1086/18,
  the Israeli Council for Higher Education (CHE) via the Weizmann Data Science Research Center,
  and a Minerva Foundation grant.  
  }
}
\author{
  Robert Krauthgamer  
  \qquad
  Shay Sapir
  \\ Weizmann Institute of Science
  \\ \texttt{\{robert.krauthgamer,shay.sapir\}@weizmann.ac.il}  
  }
\date{}
\begin{document}

\maketitle

\begin{abstract}

    A well-known approach in the design of efficient algorithms, called matrix sparsification, approximates a matrix $A$ with a sparse matrix $A'$. Achlioptas and McSherry [2007] initiated a long line of work on spectral-norm sparsification, which aims to guarantee that $\|A'-A\|\leq \epsilon \|A\|$ for error parameter $\epsilon>0$. Various forms of matrix approximation motivate considering this problem with a guarantee according to the Schatten $p$-norm for general $p$, which includes the spectral norm as the special case $p=\infty$.
    
    We investigate the relation between fixed but different $p\neq q$, that is, whether sparsification in the Schatten $p$-norm implies (existentially and/or algorithmically) sparsification in the Schatten $q\text{-norm}$ with similar sparsity. An affirmative answer could be tremendously useful, as it will identify which value of $p$ to focus on. Our main finding is a surprising contrast between this question and the analogous case of $\ell_p$-norm sparsification for vectors: For vectors, the answer is affirmative for $p<q$ and negative for $p>q$, but for matrices we answer negatively for almost all sufficiently distinct $p\neq q$. In addition, our explicit constructions may be of independent interest.
\end{abstract}

\section{Introduction}

Large graphs and matrices are ubiquitous in modern computations.
If the graphs and matrices are sparse,
then many computations are more efficient in terms of time, space and/or communication resources.
A well-known approach for leveraging this efficiency is sparsification,
where an input object (graph, vector, matrix, etc.)
is approximated using a sparse object.
This idea is used in a broad range of domains,
from combinatorics (e.g., graph spanners) 
through signal processing (e.g., sampling)
and statistics (e.g., Principal Component Analysis),
to numerical linear algebra (matrix sparsification).
We study this last domain, 
which focuses on the matrix spectrum, as formalized next. 

In matrix sparsification, the goal is to approximate a matrix $A$ using a matrix $A'$ that is sparse.
However, the notion of approximating a matrix, or even a vector, is not straightforward.
For vectors, a commonly used measure of approximation is the $\ell_p$-norm for $p\geq 1$, defined for a vector $x\in\R^n$ as
$\|x\|_p \coloneqq (\sum_{i=1}^n |x_i|^p)^{1/p}$.
Its matrix analogue is the Schatten $p$-norm, defined for a matrix $A\in\R^{n\times d}$ as
$\|A\|_{S_p} \coloneqq (\sum_{i=1}^r \sigma_i^p)^{1/p}$,
where $\sigma_1,...,\sigma_r$ are the singular values of $A$. 
These definitions extend to $p=\infty$ by taking the limit. 
They also extend to $p<1$, although they do not yield a norm.
Notice that $\ell_p$-norm is a special case of Schatten $p$-norm, restricted to diagonal matrices.
Special cases of the Schatten $p$-norm include $p=1,2,\infty$, which are the trace norm, Frobenius norm and spectral norm, respectively.
Another special case is the Schatten $0$-norm, defined as the rank of $A$.

The Schatten norms capture fundamental properties of the matrix.
In particular, multiple Schatten norms can be used for various forms of matrix approximation,
for instance, with respect to spectral sum functions~\cite{khetan2019spectrum}, or even the entire spectrum~\cite{kong2017spectrum}.
Needless to say that the Schatten $p$-norms for $p=0,2,\infty$ are of utmost importance, and in general, specific $p$ may be important for concrete applications.
For instance, for even $p$, the $p$-th power of the Schatten $p$-norm of a graph adjacency matrix is the number of closed walks of length $p$.
Another widely known application is of
the trace norm ($p=1$) as a relaxation for the rank \cite{DBLP:journals/cacm/CandesR12,DBLP:journals/tit/CandesT10}, and even other values of $p<1$ are used in this way~\cite{DBLP:conf/aaai/NieHD12}.

Achlioptas and McSherry~\cite{achlioptas2007fast} initiated a long line of work
\cite{arora2006fast,gittens2009error,drineas2011note,nguyen2015tensor,achlioptas2013near,kundu2014note,kundu2017recovering,DBLP:conf/colt/BravermanK0S21}
that aims to find a sparse matrix $A'$ such that $\|A'-A\|\leq \epsilon \|A\|$ for $\epsilon>0$, where $\|\cdot\|$ is the spectral norm, which is also the largest singular value or the Schatten $\infty$-norm.
The main algorithmic technique in this line of work is to sample entries independently
according to a well-crafted distribution.
The error matrix $E=A'-A$ is then analyzed using tools from random matrix theory and is often of high rank.
An undesired byproduct is that if the original matrix $A$ is of low rank, $A'$ might be of full rank, and thus a bad approximation of $A$ in the Schatten $0$-norm.
Additionally, there is no guarantee that the Frobenius norm of $A'$ would be similar to that of $A$, and indeed there are simple counterexamples.
Perhaps surprisingly, little is known about sparsification in Schatten $p$-norm for other values of $p$, except for $p=2$ which is the Frobenius norm and reduces to vectors. 
Sparsification algorithms for other values of $p$ could open the door for new applications.

A recent line of work studies sparse and low-rank decomposition, where, given a matrix $A$, it is decomposed as $A = S + B$, such that $S$ is sparse and $B$ is low rank~\cite{CHANDRASEKARAN20091493}. 
In our language, this corresponds to approximating $A$ in Schatten $0$-norm by
considering the matrix $B$ as the error matrix.
The same paper~\cite{CHANDRASEKARAN20091493} further uses the Schatten $1$-norm as a surrogate for the rank, which then corresponds to sparsification in Schatten $1$-norm.
However, they differ from our approach by also relaxing the sparsity of $S$ to the $\ell_1$-norm.

Characterizing which matrices can be sparsified for each $p$ is a wide open problem (except for the easy case $p=2$); in fact not even resolved for $p=\infty$.
As a first step, we ask whether sparsification with respect to one norm, say Schatten $p$-norm, implies sparsification with respect to a different norm, say Schatten $q$-norm. 
An affirmative answer could be tremendously useful, because of the already large body of work on $p=\infty$ (and the easy case $p=2$).
We now define matrix approximation with respect to a general norm $\|\cdot\|_N$, although our results deal only with Schatten norms.

\begin{definition}\label{def:general_norm}\label{def:Schatten_norm} 
Given norm $\|\cdot\|_N:\R^m\to\R_+$ and accuracy parameter $\epsilon>0$, 
an \emph{$(\epsilon,N)$-norm approximation} of $x\in \R^m$
is any $x'\in \R^m$
such that $\|x-x'\|_N\leq \epsilon \|x\|_N$.
If $x'$ has at most $s>0$ non-zero entries, we call it an \emph{$(\epsilon,N,s)$-norm sparsifier} of $x$.
\end{definition}

We now instantiate this definition to the Schatten $p$-norm for $p\geq 0$.
An \emph{$(\epsilon,S_p,s)$-norm sparsifier} of a matrix $A\in\R^{n\times d}$ is a matrix $A'\in\R^{n\times d}$ that has at most $s>0$ non-zero entries and satisfies%
\footnote{We define it for the general case of rectangular matrices, but we focus on square matrices.}
\[
\|A-A'\|_{S_p}\leq \epsilon \|A\|_{S_p}.
\]

One may wonder if such a sparsification is even possible (except for trivial matrices).
However, the existence of spectral sparsification of graph Laplacians~\cite{DBLP:journals/siamcomp/SpielmanT11,BatsonSS12} implies that every graph Laplacian admits an $(\epsilon,S_p,O(\epsilon^{-2}n))$-norm sparsifier, in fact simultaneously for all $p\geq 1$, see Section~\ref{subsec:related_work}. 
With the above terminology at hand, the question raised earlier can be formulated as follows.
Throughout, we write $\tilde{O}(f)$ as a shorthand for $O(f\polylog n)$, and $\tilde{\Omega}(f)$ as a shorthand for $\Omega(f/\polylog n)$.

\begin{question}\label{question:Sp_to_Sq}
For $p\neq q$ and $0<\epsilon<1$, does an $(\epsilon,S_p,s)$-norm sparsifier for a matrix $A\in\R^{n\times d}$
necessarily imply (existentially and/or algorithmically)
also an $(\epsilon',S_q,s')$-norm sparsifier with $s'=\tilde{O}(s)$ \and $\epsilon'\leq \epsilon^{\Omega(1)}$?
\end{question}

As a first step, it is instructive to consider $(\epsilon,\ell_p)$-norm sparsifiers for vectors, since $\ell_p$ norm is analogous to the Schatten $p$-norm, but for vectors instead of matrices.
This can also be viewed as Question~\ref{question:Sp_to_Sq} in the special case of diagonal matrices.

\begin{question}\label{question:ellp_to_ellq}
For $p\neq q$ and $0<\epsilon<1$, does an $(\epsilon,\ell_p,s)$-norm sparsifier for a vector $x\in \R^n$
necessarily imply (existentially and/or algorithmically) also an $(\epsilon',\ell_q,s')$-norm sparsifier with $s'=\tilde{O}(s)$ and $\epsilon'\leq \epsilon^{\Omega(1)}$?
\end{question}


Our main finding is a surprising contrast in these two questions: 
a \emph{mostly affirmative answer} for Question~\ref{question:ellp_to_ellq}
(roughly for all $p<q$) with $\epsilon'=\epsilon$,
but a \emph{resounding negative answer} for Question~\ref{question:Sp_to_Sq}
(roughly for all $p\neq q$)
even when $\epsilon'$ is allowed to be a fixed constant.

An easy case is when $p$ and $q$ are sufficiently close,
and then the answer is affirmative, with $\epsilon'=2\epsilon$,
for both $\ell_p$ and Schatten $p$-norm, as follows.
For all $p<q$ and $a\in\R^n$,
H\"older's inequality implies that
$\|a\|_{q}\leq \|a\|_{p} \leq n^{\frac{1}{p}-\frac{1}{q}}\|a\|_{q}$. 
If $n^{\frac{1}{p}-\frac{1}{q}}$ is small, say at most $1+\epsilon$,
the two norms are approximately equal,
and thus sparsification in one norm immediately implies sparsification in the other norm.
The same holds for the Schatten $p$-norm as well.
The questions remain interesting when $n^{|\frac{1}{p}-\frac{1}{q}|}$ is not small,
particularly when $p\neq q$ are fixed and $n$ tends to infinity.

\subsection{Main Results}\label{sec:mainResults}

\paragraph{Vectors.}
For the family of $\ell_p$-norms,
we show that if $p<q$ then the answer to Question~\ref{question:ellp_to_ellq} is \emph{affirmative},
i.e., an $(\epsilon,\ell_p)$-norm sparsifier implies also an $(\epsilon,\ell_q)$-norm sparsifier using similar (but not identical) sparsity.
This is formalized in the next theorem, 
whose proof appears in Section~\ref{sec:vectors}.

\begin{restatable}{theorem}{ellpThm}\label{thm:ell_p_reduction}
Let $1\leq p < q$. Then
for all $0<\epsilon<\tfrac{1}{e}$ and $s>0$,
if $x\in\R^n$ has an $(\epsilon,\ell_p,s)$-norm sparsifier,
then it also has an $(\epsilon,\ell_q,O(s))$-norm sparsifier. 
\end{restatable}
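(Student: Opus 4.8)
The plan is to take the given $(\epsilon,\ell_p,s)$-sparsifier $x'$ of $x$ and post-process it into an $(\epsilon,\ell_q,O(s))$-sparsifier. First I would observe that we may assume $x'$ is obtained from $x$ by zeroing out some coordinates and keeping the rest exactly equal to $x$: if $x'$ is an arbitrary $s$-sparse vector, replacing each nonzero coordinate $x'_i$ by $x_i$ only decreases the error coordinatewise in absolute value (since $|x_i - x_i| = 0 \le |x_i - x'_i|$ on the support, and the off-support coordinates are unchanged), hence does not increase $\|x-x'\|_p$, while $\|x\|_p$ is fixed. So without loss of generality $x' = x_S$ for some $S\subseteq[n]$ with $|S|\le s$, and the hypothesis says $\|x_{\bar S}\|_p \le \epsilon \|x\|_p$, where $\bar S = [n]\setminus S$ and $x_T$ denotes $x$ restricted to $T$.

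Now I want to produce a set $S'\supseteq S$ with $|S'| = O(s)$ such that $\|x_{\overline{S'}}\|_q \le \epsilon \|x\|_q$. The natural choice is to let $S'$ consist of $S$ together with the $O(s)$ coordinates of largest magnitude among $\bar S$ (equivalently, the top $O(s)$ coordinates of $x$ overall). The key point is a tail bound: if $y := x_{\bar S}$ satisfies $\|y\|_p \le \epsilon\|x\|_p$ and we keep its top $t$ coordinates, the remaining $\ell_q$ mass is controlled. Concretely, for $p<q$, removing the top $t$ entries of a vector $y$ leaves a vector whose $\ell_q$-norm is at most $t^{1/q-1/p}\|y\|_p$ — this is the standard "head-tail" inequality (each surviving coordinate is at most the $t$-th largest, which is at most $t^{-1/p}\|y\|_p$, and then $\sum (\cdot)^q \le (t^{-1/p}\|y\|_p)^{q-p}\|y\|_p^p$). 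Plugging in $t = \Theta(s)$ and $\|y\|_p \le \epsilon\|x\|_p$, the tail in $\ell_q$ is at most $(\Theta(s))^{1/q-1/p}\epsilon\|x\|_p$. The remaining task is to compare this against $\epsilon\|x\|_q$.

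The heart of the argument — and the step I expect to be the main obstacle — is showing that $\|x\|_p$ itself cannot be much larger than $s^{1/p-1/q}\|x\|_q$, which is exactly what is needed to absorb the factor $s^{1/q-1/p}$ above. This is where the hypothesis that $x$ has \emph{some} $(\epsilon,\ell_p,s)$-sparsifier is used a second time, not just to locate $S$. The idea: split $\|x\|_p^p = \|x_S\|_p^p + \|x_{\bar S}\|_p^p$. The second term is at most $\epsilon^p\|x\|_p^p$, so $\|x_S\|_p^p \ge (1-\epsilon^p)\|x\|_p^p$, i.e. $\|x_S\|_p \ge (1-\epsilon^p)^{1/p}\|x\|_p \ge \tfrac12\|x\|_p$ (using $\epsilon < 1/e$). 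But $x_S$ is $s$-sparse, so by Hölder $\|x_S\|_p \le |S|^{1/p-1/q}\|x_S\|_q \le s^{1/p-1/q}\|x\|_q$. Combining, $\|x\|_p \le 2 s^{1/p-1/q}\|x\|_q$. Substituting back, the $\ell_q$-tail after keeping the top $\Theta(s)$ coordinates is at most $(\Theta(s))^{1/q-1/p}\cdot\epsilon\cdot 2 s^{1/p-1/q}\|x\|_q = 2(\Theta(1))^{1/q-1/p}\epsilon\|x\|_q$, and by choosing the constant in $\Theta(s)$ large enough (depending only on $p,q$) we make $2(\Theta(1))^{1/q-1/p} \le 1$, yielding $\|x_{\overline{S'}}\|_q \le \epsilon\|x\|_q$ as required.

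Finally I would assemble these pieces: set $S'$ to be $S$ together with the top $Cs$ coordinates of $x$ for a suitable constant $C = C(p,q)$, take $x'' := x_{S'}$, check $|S'| \le (C+1)s = O(s)$, and verify the error bound via the chain above (noting the top $Cs$ coordinates of $\bar S$ are among the top $(C+1)s$ of $x$, so the leftover of $x_{\bar S}$ after removing its own top $Cs$ coordinates dominates $\|x_{\overline{S'}}\|$). The only mild subtlety to state carefully is that the "keep the top $t$ coordinates" bound should be applied to $y = x_{\bar S}$ with $t = Cs$, and one must confirm $\overline{S'}\subseteq \bar S$ with the removed part containing the top $Cs$ entries of $y$ — which holds since $S'\supseteq S$. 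All other steps are routine applications of Hölder's inequality and the head-tail estimate.
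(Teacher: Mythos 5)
Your proof is correct and takes essentially the same route as the paper: both combine H\"older's inequality on the $s$-sparse head with the head--tail estimate $\|a_{\tail(t)}\|_q \le t^{-(1/p-1/q)}\|a\|_p$, which is exactly Lemma~\ref{lem:genera_NS_tail}. The one substantive difference is quantitative: to absorb the factor of $2$ from the crude lower bound $(1-\epsilon^p)^{1/p}\ge\tfrac12$, your constant must satisfy $C\ge 2^{1/(1/p-1/q)}$ and hence depends on $p,q$, blowing up as $p\to q$; the paper instead chooses $c=\bigl(\tfrac{(1-\epsilon^q)^{1/q}}{(1-\epsilon^p)^{1/p}}\bigr)^{1/(1/p-1/q)}s$ so that the tail bound recombines with $\|x_{\head(s)}\|_q^q$ to yield exactly $\epsilon^q\|x\|_q^q$, and a calculus argument then shows $c/s\le e$ \emph{uniformly} in $p,q,\epsilon$. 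Your argument therefore proves the theorem as stated but not the paper's explicit follow-up remark that the hidden constant in $O(s)$ is independent of $p,q,\epsilon$. (A small slip in your final assembly: with $S'= S\cup\head(Cs)$ rather than $S'=S\cup\{\text{top }Cs\text{ of }\bar S\}$, the complement $\overline{S'}$ only guarantees removal of the top $(C-1)s$ entries of $x_{\bar S}$, not its top $Cs$; this is harmless after reindexing.)
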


The hidden constant in this $O(s)$ bound
is independent of $p,q$ and $\epsilon$,
and in fact, such a constant-factor loss in sparsity is necessary.
Consider, say, $p=1$ and $q=\infty$,
and a vector $x$ with $s+t$ non-zero coordinates,
$s$ of which equal $1$, and $t< s$ of which equal $\tfrac{s}{t}\epsilon$;
then $x$ has an $(\epsilon,\ell_1,s)$-norm sparsifier,
but every $(\epsilon,\ell_\infty)$-approximation requires $s+t$ non-zeros.
Fixing $t=\tfrac{s}{2}$, one can see that some loss in sparsity is indeed necessary.

In the other case of $p>q$, 
the answer to Question~\ref{question:ellp_to_ellq} is \emph{negative}
whenever $n^{\frac{1}{q}-\frac{1}{p}}\geq\epsilon^{-1}$.
Indeed, consider the vector $x=(1,\tfrac{1}{n^{1/q}},...,\tfrac{1}{n^{1/q}})$.
We denote by $e_1$ the first vector in the standard basis.
Without the first coordinate, $x$ satisfies $\|x-e_1\|_q \approx 1$ and $\|x-e_1\|_p = ((n-1)n^{-\frac{p}{q}})^{1/p} \approx n^{\frac{1}{p}-\frac{1}{q}} \leq \epsilon$, hence $e_1$ is a $(2\epsilon,\ell_p,1)$-norm sparsifier,
but not a $(0.1,\ell_q)$-norm approximation, and even taking more entries, say $x'=x_{\head(n/2)}$,
will not give a $(0.1,\ell_q)$-norm approximation.
Throughout, for integer $c<n$, we denote by $x_{\head(c)}$ the vector $x$ after zeroing out all but the $c$ largest entries in absolute value, breaking ties arbitrary.
Similarly, $x_{\tail(c)} = x - x_{\head(c)}$ is the vector $x$ after zeroing out the $c$ largest entries in absolute value.

\paragraph{Matrices.}
One may hope to extend the above result about $\ell_p$ norms of vectors
to Schatten norms of matrices.
Unfortunately, this is not possible, 
and in fact we answer Question~\ref{question:Sp_to_Sq} negatively for all fixed $p\neq q$, as follows.
Let $\epsilon_0>0$ be a sufficiently small fixed constant ($\epsilon_0 = 0.1$ works).%

\begin{restatable}{theorem}{schattenThm}\label{thm:main_forall_pq_separation}
Fix $p\neq q \geq 1$.
Then for all $n$ and $0.1 \geq\epsilon_0\geq \epsilon>(\log n)^{-|\frac{1}{q}-\frac{1}{p}|}$, there is a matrix $A\in\R^{n\times n}$ that has an $(\epsilon, S_p, O(n))$-norm sparsifier,
but every $(\epsilon_0, S_q)$-norm approximation of $A$ must have $\tilde{\Omega}(n^2)$ non-zero entries.
\end{restatable}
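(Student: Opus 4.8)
The plan is to exhibit $A$ as a sum $A=S+M$ of a \emph{sparse} matrix $S$ with $O(n)$ nonzeros and a \emph{dense} matrix $M$, where $S$ itself witnesses the $(\epsilon,S_p,O(n))$-sparsifier while $M$ carries essentially all of the Schatten-$q$ mass of $A$ and is ``incompressible'' in $S_q$. For $S$ we take a scaled permutation matrix or the identity, whose $n$ singular values are all equal, so its Schatten profile $\|S\|_{S_s}=n^{1/s}\mu$ is as spread as possible; for $M$ we take a \emph{flat} matrix, i.e. one whose nonzero entries all have the same magnitude, with a prescribed rank $r$ and a prescribed common singular value $\tau$ --- e.g.\ $M=\tfrac{c}{n}J_n$ when $r=1$, $M=\tau Q$ with $Q$ a normalized Hadamard/Fourier matrix when $r=n$, and Kronecker products $N\otimes J$ of a flat full-rank block $N$ with an all-ones block $J$ for intermediate ranks. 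One then tunes $(\mu,\tau,r)$ so that $\|M\|_{S_p}\le\epsilon\|S\|_{S_p}$, hence $\|A\|_{S_p}=\Theta(\|S\|_{S_p})$ and $S$ is a genuine $(\epsilon,S_p)$-sparsifier with $O(n)$ nonzeros, while $\|M\|_{S_q}=\Omega(\|S\|_{S_q})$, hence $\|A\|_{S_q}=\Theta(\|M\|_{S_q})$.

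The two workhorse facts are: (i) \emph{Schatten interpolation} --- for $p<q$ one has $1\le\|X\|_{S_p}/\|X\|_{S_q}\le \mathrm{rank}(X)^{1/p-1/q}$, together with $\|X\|_{S_q}\ge n^{1/q-1/2}\|X\|_F$ for $q\ge2$ and $\|X\|_{S_q}\ge\|X\|_F$ for $q\le 2$; and (ii) \emph{incompressibility of flat matrices} --- if $M$ is flat, then any $B$ with $o(n^2/\polylog n)$ nonzeros misses a $(1-o(1))$-fraction of $M$'s Frobenius mass, so $\|M-B\|_F\ge(1-o(1))\|M\|_F$. Combining (i) and (ii) yields $\|M-B\|_{S_q}\gtrsim\|M\|_{S_q}$ whenever $M$'s singular-value distribution is flat enough that $\|M\|_{S_q}\asymp n^{1/q-1/2}\|M\|_F$ (automatic for flat full-rank $M$ and $q\ge2$, and for flat $M$ of sufficiently small constant rank and $q\le2$). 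The remaining shape of $B$ to worry about --- $B$ nearly full rank but of moderate operator norm --- is handled by the Weyl/interlacing inequality $\sigma_a(M)\le\sigma_{a-b+1}(M-B)+\sigma_b(B)$, which shows that subtracting such a $B$ cannot destroy the ``bulk'' of $M$'s equal singular values, so $M-B$ still has $\Omega(n)$ singular values of size $\Omega(\tau)$ and thus $\|M-B\|_{S_q}=\Omega(\tau n^{1/q})=\Omega(\|M\|_{S_q})$; and if $B$ has a large operator norm, then already $\|M-B\|_{S_\infty}>\epsilon_0\|A\|_{S_q}$. Plugging $A'=S$ into the definition gives the $S_p$-sparsifier; applying the above with $B=A''-S$ (still with $\tilde o(n^2)$ nonzeros) to an arbitrary $S_q$-approximation $A''$ of $A$ gives $\|A-A''\|_{S_q}=\|M-B\|_{S_q}>\epsilon_0\|A\|_{S_q}$, forcing $\tilde\Omega(n^2)$ nonzeros.

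To instantiate this it is cleanest to split on the position of $p,q$ relative to $2$. If $p<q\le2$, take $M=\tfrac{c}{n}J_n$ (flat, rank $1$), $S=I_n$, and $c\asymp\epsilon n^{1/p}$: then $\|A-I_n\|_{S_p}=c\le\epsilon\|A\|_{S_p}$, while $c\gtrsim n^{1/q}$ (using $\epsilon\ge(\log n)^{-|1/q-1/p|}\ge n^{-|1/q-1/p|}$) makes $\|M\|_{S_q}=c$ dominate $\|I_n\|_{S_q}=n^{1/q}$, and rank-$1$ flatness gives $\|M-B\|_{S_q}\ge\|M-B\|_F\ge(1-o(1))c=(1-o(1))\|M\|_{S_q}$. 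Symmetrically, if $p>q\ge2$, take $M=\tau Q$ (flat, full rank), $S=e_1e_1^\top$, and $\tau\asymp n^{-1/q}$: then $\|A-S\|_{S_p}=\|M\|_{S_p}=n^{1/p-1/q}\ll\epsilon$, while $\|M\|_{S_q}=n^{1/q}\tau\asymp1\asymp\|S\|_{S_q}$, and the flat-full-rank bound gives $\|M-B\|_{S_q}\ge n^{1/q-1/2}(1-o(1))\|M\|_F=(1-o(1))\|M\|_{S_q}$. The remaining regimes --- essentially when $p<2<q$ or the opposite straddle, and when $\epsilon$ is sub-constant --- are obtained by interpolating between these two extremes: one uses a flat $M$ of intermediate rank $r$ (a Kronecker product of a flat full-rank block with an all-ones block), or several flat blocks at geometrically spaced scales, chosen so that the dense part's Schatten mass migrates from negligible at exponent $p$ to dominant at exponent $q$. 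This is exactly where the hypothesis $\epsilon>(\log n)^{-|1/q-1/p|}$ enters: it bounds how spread $M$ needs to be (how far $r$ must be pushed toward $n$), leaving enough room to still run the flatness-plus-Weyl incompressibility argument.

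The main obstacle, and where essentially all the work goes, is this last balancing act. A dense matrix whose singular values are spread enough for the $S_p$-to-$S_q$ norm gap to absorb a sub-constant $\epsilon$ is pulled toward being flat full rank, whereas the Frobenius-based incompressibility bound degrades by the factor $n^{1/q-1/2}$ and becomes vacuous for $q<2$ once $M$ has too many roughly-equal singular values. Reconciling these constraints --- through the Weyl bulk-survival argument and/or a multi-scale block construction, while keeping the sparse witness $S$ honest --- is the crux; the Schatten interpolation inequalities and the ``sparse matrices miss the Frobenius mass of a flat matrix'' lemma are the routine parts.
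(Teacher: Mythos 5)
Your overall framework is the same as the paper's: decompose $A=A'+B$ with $A'$ sparse and $B$ dense, arrange that $\|B\|_{S_p}$ is tiny relative to $\|A'\|_{S_p}$ while $\|B\|_{S_q}\asymp\|A'\|_{S_q}$, and show $B$ is $S_q$-incompressible. Your two ``pure'' instantiations are exactly the paper's: $I_n + cJ_n/n$ for $1\le p<q\le 2$, and $e_1e_1^\top + \tau H$ for $p>q\ge 2$. For the latter, your Frobenius-based incompressibility argument --- any $B$ with $o(n^2)$ nonzeros misses $(1-o(1))$ of the Frobenius mass of a flat full-rank matrix, and $\|X\|_{S_q}\ge n^{1/q-1/2}\|X\|_F$ --- is correct and actually a bit cleaner than the paper's route (which derives the same via the pinching inequality with singleton blocks). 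So far so good.

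The gap is in the two remaining cases, which you yourself flag as ``where essentially all the work goes.'' For $1\le p<q$ with $q>2$, the paper uses a flat matrix $B=H_{[m]}\otimes v$ of \emph{intermediate} rank $m=n/\log^2 n$ (matching your Kronecker suggestion). But the two tools you propose for incompressibility both fail here. The Frobenius route gives $\|M-B'\|_{S_q}\ge n^{1/q-1/2}(1-o(1))\|M\|_F=(n/m)^{1/q-1/2}(1-o(1))\|M\|_{S_q}$, and since $q>2$ the factor $(n/m)^{1/q-1/2}=(\log n)^{2/q-1}=o(1)$ is a genuine, not cosmetic, loss --- it makes the bound vacuous against the constant-error target. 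The Weyl route $\sigma_a(M)\le\sigma_{a-b+1}(M-B')+\sigma_b(B')$ would need a handle on a \emph{middle} singular value $\sigma_b(B')$ of the competitor, but a sparse $B'$ can still have rank $n$ and singular values of any magnitude (nothing bounds its entries a priori), so ``the bulk of $M$'s singular values survives'' does not follow from sparsity alone. Your escape hatch (``if $B$ has a large operator norm then $\|M-B\|_{S_\infty}$ is already large'') only controls $\sigma_1(B')$ and gives $\|M-B'\|_{S_q}\gtrsim\tau$, which is $o(\|M\|_{S_q})=o(m^{1/q}\tau)$ --- again useless. The paper's actual mechanism, which you don't mention, is the \emph{pinching inequality}: for $q\ge 2$, $\|X\|_{S_q}^q\ge\sum_i\|X^{(U_i)}\|_{S_\infty}^q$ for any disjoint block partition $\{U_i\}$. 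This localizes the problem to the $m$ rank-one row-blocks $B_{U_i}$ of the Kronecker construction; on each block, zeroing out more than half the columns unavoidably leaves $\Omega(n^\alpha)$ of spectral norm, and the pinching sums these contributions \emph{without} the $(n/m)^{1/q-1/2}$ loss. That step is the crux and has no analogue in your argument.

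The fourth case, $1\le q<\min(p,2)$, is likewise not really addressed: the paper needs a block-diagonal $C$ with $n/\log n$ copies of $\frac{1}{m}J_m$ and again invokes pinching to reduce to per-block rank-one Frobenius incompressibility. Your passing reference to ``several flat blocks at geometrically spaced scales'' is not the same construction and is not developed. To sum up: your decomposition and the two extreme constructions are right, and the Frobenius argument is a nice simplification for $p>q\ge 2$; but the intermediate-rank incompressibility is the heart of the theorem, the Frobenius bound provably degrades there by a polylog factor that kills it, and the replacement tool --- the pinching inequality --- is missing from the proposal.
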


This theorem clearly needs some lower bound on $\epsilon$ to avoid the case $\epsilon=0$, yet our result covers the interesting case of fixed $\epsilon>0$ and $n\to \infty$.
We prove Theorem~\ref{thm:main_forall_pq_separation} in Section~\ref{sec:Schatten} by
providing explicit matrices $A\in\R^{n\times n}$, for infinitely many $n$.
We actually provide four families of matrices,
for different ranges of the parameters $p$ and $q$, some of which have a smaller lower bound on $\epsilon$.

These matrices may guide the design of sparsification algorithms for Schatten $p$-norms, e.g., for a specific value of $p$, one has to consider a class of matrices that do not include our hard instances for $p$.
  Thus, algorithms for one Schatten $p$-norm do not have to perform well for other Schatten $q$-norms, i.e., $q\neq p$, and might need to differ considerably.

We can further extend our results to many cases where $p<1$ or $q<1$,
including the important but exceptional case of the Schatten $0$-norm, 
as discussed next.
Our explicit constructions may be of independent interest.

\subsection{The Schatten $0$-norm}\label{subsec:intro_small_pq}
Perhaps the most important Schatten $p$-norm is the Schatten $0$-norm, which is the rank and is actually not a norm.
For this Schatten $0$-norm, Definition~\ref{def:Schatten_norm} instantiates to the following: given a matrix $A\in\R^{n\times d}$, decompose it into $A = S +R$ where $S$ is sparse and $\rank(R)\leq \epsilon\cdot\rank(A)$.
This is known from the optimization literature as sparse and low-rank decomposition~\cite{CHANDRASEKARAN20091493}, and from circuit complexity theory as Valiant's rigidity~\cite{DBLP:conf/mfcs/Valiant77}.
Hence, there is interest in the range $p<1$ (even though it is not a norm),
in part as a relaxation for the rank~\cite{DBLP:conf/aaai/NieHD12}.
Notice however that $\|A\|_{S_0}$ is not the limit of $\|A\|_{S_p}$ as $p\to 0$, 
but rather of $\|A\|_{S_p}^p$.
Thus, we must treat the Schatten $0$-norm separately,
rather than let $p\to 0$ in our results for $p>0$.

We show that Theorem~\ref{thm:main_forall_pq_separation} holds even for $0\leq p<1$ or $q<1$, except for $0<q<\min(p,1)$ where we do not have a proof.
See Section~\ref{sec:small_p_q} for formal statements and proofs.

\subsection{Related Work}\label{subsec:related_work}

\paragraph{Low-rank approximation}
Instead of sparsifying a matrix $A\in\R^{n\times d}$, a different goal is to decompose it as $A\approx UV$ where $U\in\R^{n\times k}$ and $V\in \R^{k\times d}$.
If $k$ is small, this is called a low-rank approximation.
Storing it only takes $O(k(n+d))$ space, and every multiplication by $A$ can be done by first applying $V$ and then applying $U$, which saves up on computation time.
One often measures the quality of this approximation in the
Frobenius norm,
and sometimes in other matrix norms.
Li and Woodruff~\cite{DBLP:conf/icml/LiW20} provide fast algorithms for low-rank approximation in the Schatten $p$-norm for all $p\geq 1$ by using dimension reduction for Ky-Fan $p$-norms.
Recently, Bakshi, Clarkson and Woodruff~\cite{DBLP:conf/stoc/BakshiCW22} used iterative Krylov methods to provide improved algorithms using a small number of matrix-vector products.

\paragraph{Sparsification in other matrix norms}
Gittens and Tropp~\cite{gittens2009error} considered sparsification in $\ell_\infty \to \ell_p$ operator norms, defined as $\|A\|_{\ell_\infty \to \ell_p} = \max_{x\neq 0} \frac{\|Ax\|_p}{\|x\|_\infty}$.
They focus on $\ell_\infty \to\ell_1$, which has strong equivalence to the cut norm, and on $\ell_\infty \to \ell_2$, which is relevant to applications such as column subset selection.

\paragraph{Spectral sparsification}
Graph Laplacians admit very good spectral sparsification, as follows.
A positive semi-definite matrix (PSD) $A'\in\R^{n\times n}$ is said to be an \emph{$\epsilon$-spectral approximation} of a PSD matrix $A\in\R^{n\times n}$ if $(1-\epsilon) A  \preceq A' \preceq (1+\epsilon) A$~\cite{DBLP:journals/siamcomp/SpielmanT11}.%
\footnote{For symmetric matrices $A,B$, we denote $A\preceq B$ if $B-A$ is PSD.}
This notion is stronger than Schatten $p$-norm approximation, 
by the following observation.

\begin{restatable}{lemma}{lemmaSpectralToSchatten}\label{lem:spectral_to_schatten}
For all PSD matrices $A\in\R^{n\times n}$ and $\epsilon>0$, 
every $\epsilon$-spectral approximation $A'$ of $A$ is also an $(\epsilon,S_p)$-norm approximation of $A$, simultaneously for all $p\geq 1$.
\end{restatable}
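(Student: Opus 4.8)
The plan is to reduce the statement to a coordinate-wise inequality between the singular values (equivalently, eigenvalues) of $A$ and of $A-A'$, and then apply monotonicity of the $\ell_p$-norm under such a comparison. First I would use that $A$ and $A'$ are PSD, so $A$ has eigenvalues $\lambda_1,\dots,\lambda_n \ge 0$, which are exactly its singular values, and $\|A\|_{S_p}^p = \sum_i \lambda_i^p$. The spectral approximation hypothesis $(1-\epsilon)A \preceq A' \preceq (1+\epsilon)A$ rearranges to $-\epsilon A \preceq A' - A \preceq \epsilon A$, i.e.\ the symmetric error matrix $E := A' - A$ satisfies $-\epsilon A \preceq E \preceq \epsilon A$. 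The goal is to conclude $\|E\|_{S_p} \le \epsilon \|A\|_{S_p}$.

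The key step is to bound the eigenvalues of $E$, in absolute value, by those of $\epsilon A$. From $E \preceq \epsilon A$ and $-E \preceq \epsilon A$, Weyl's monotonicity principle (or the Courant–Fischer min-max characterization) gives, for every $i$, that $\lambda_i(E) \le \lambda_i(\epsilon A) = \epsilon\lambda_i(A)$ and $\lambda_i(-E) \le \epsilon\lambda_i(A)$; since the singular values of the symmetric matrix $E$ are $|\lambda_i(E)|$ sorted in decreasing order, and $|\lambda_i(E)| = \max(\lambda_i(E), \lambda_i(-E)$ appropriately indexed$)$, one gets $\sigma_i(E) \le \epsilon \sigma_i(A)$ for all $i$ after sorting. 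Then for any $p \ge 1$, $\|E\|_{S_p}^p = \sum_i \sigma_i(E)^p \le \sum_i (\epsilon\,\sigma_i(A))^p = \epsilon^p \|A\|_{S_p}^p$, and taking $p$-th roots finishes the case $p<\infty$; the case $p=\infty$ follows by taking the limit, or directly since $\sigma_1(E) \le \epsilon\sigma_1(A)$.

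I expect the main obstacle to be the bookkeeping in passing from the eigenvalue inequalities for the symmetric (but possibly indefinite) matrix $E$ to an inequality between the \emph{sorted} singular-value sequences of $E$ and $A$. The cleanest route is: the two PSD conditions $\pm E \preceq \epsilon A$ are equivalent to $\|x^\top E x\| \le \epsilon\, x^\top A x$ for all $x$, hence to $-\epsilon A \preceq E \preceq \epsilon A$, and then one invokes the standard fact that if $-B \preceq E \preceq B$ with $B$ PSD then $\sigma_i(E) \le \sigma_i(B)$ for all $i$ (this is a form of the Weyl inequalities, and can also be derived from the fact that $B \pm E \succeq 0$ forces the min-max values of $|E|$ to be dominated by those of $B$). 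Once that fact is in hand, everything else is the routine $\ell_p$ computation above. It is worth remarking that the lemma also holds trivially for $0 \le p < 1$ by the same singular-value domination, even though $S_p$ is then not a norm.
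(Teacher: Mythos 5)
Your reduction stalls at a false lemma: the ``standard fact'' you invoke---that $-B\preceq E\preceq B$ with $B$ positive semidefinite implies $\sigma_i(E)\le\sigma_i(B)$ for every $i$---is not true, and this is a genuine gap, not the kind of indexing bookkeeping you anticipate being able to push through. Take
\[
B=\begin{pmatrix}2&0\\0&\tfrac{1}{2}\end{pmatrix},\qquad E=\begin{pmatrix}0&1\\1&0\end{pmatrix}.
\]
Both $B+E$ and $B-E$ have trace $\tfrac{5}{2}$ and determinant $0$, hence are PSD, so $-B\preceq E\preceq B$; yet the singular values of $E$ are $1,1$ while those of $B$ are $2,\tfrac{1}{2}$, so $\sigma_2(E)=1>\tfrac{1}{2}=\sigma_2(B)$. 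In the lemma's own terms, set $A=\epsilon^{-1}B$ and $A'=A+E$: then $A'$ is an $\epsilon$-spectral approximation of $A$ with $\sigma_2(A'-A)>\epsilon\,\sigma_2(A)$. Weyl's monotonicity does give $\lambda_i(E)\le\lambda_i(B)$ and $\lambda_i(-E)\le\lambda_i(B)$ for each $i$, but once you re-sort the eigenvalues of $E$ by absolute value to form its singular values, the index alignment with the eigenvalues of $B$ breaks down; that is exactly the spot your proof marks as ``appropriately indexed'' and then treats as routine.

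What you actually need is only the norm-level inequality $\|E\|_{S_p}\le\|B\|_{S_p}$ for all $p\ge1$, which is true but requires an ingredient beyond coordinate-wise eigenvalue comparison. The paper's proof diagonalizes $E=UDU^\top$ with $U$ orthogonal, so that $\|E\|_{S_p}^p=\sum_i|u_i^\top Eu_i|^p\le\sum_i\bigl(u_i^\top(\epsilon A)u_i\bigr)^p=\epsilon^p\|\diag(U^\top AU)\|_{S_p}^p$, and then invokes the pinching inequality (Lemma~\ref{lem:pinching_bhatia}) to get $\|\diag(U^\top AU)\|_{S_p}\le\|U^\top AU\|_{S_p}=\|A\|_{S_p}$. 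That pinching step is the idea your argument is missing; it is also why the statement is restricted to $p\ge1$, so your closing remark that the same argument extends to $0\le p<1$ does not go through either.
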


A proof of Lemma~\ref{lem:spectral_to_schatten} is provided in Appendix~\ref{appendix} for completeness.
For a graph Laplacian, one can compute an $\epsilon$-spectral sparsifier that has $O(\epsilon^{-2}n)$ non-zero entries~\cite{BatsonSS12}, see also~\cite{LeeSun18} for the latest time bounds for computing such sparsifiers.

\subsection{Notations}\label{subsec:notations}

For a matrix $A\in\R^{n\times d}$, we denote the $i$-th row by $A_i$ and the $j$-th column by $A^j$.
For sets $S\in [n]$ and $T\in [d]$, we denote by $A_{(S)}$ the submatrix made of rows of $A$ indexed by $S$, and by $A^{(T)}$ the submatrix made of columns of $A$ indexed by $T$.
Similarly, $A_{(S)}^{(T)}$ is the submatrix made of rows indexed by $S$ and columns indexed by $T$.
We denote the number of non-zero entries in $A$ by $\nnz(A)$.
We assume that $n$ is a power of $2$, which can be obtained by padding with $0$'s
and affects the derived bounds only by a constant factor.


\section{Simple Case: Vectors}\label{sec:vectors}
In this section we study the simpler case of $(\epsilon,\ell_p)$-norm sparsification.
We now restate and prove Theorem~\ref{thm:ell_p_reduction}.

\ellpThm*

As was discussed in Section~\ref{sec:mainResults}, this does not hold for fixed $p>q$.
Our proof of Theorem~\ref{thm:ell_p_reduction} will use the following generalization of Lemma 3 from~\cite{gupta2018exploiting}.
\begin{lemma}\label{lem:genera_NS_tail}
For all $a\in\R^n,1\leq p < q$ and integer $c<n$, we have $\|a_{\tail(c)}\|_q\leq c^{-(\frac{1}{p}-\frac{1}{q})}\|a\|_p$.
\end{lemma}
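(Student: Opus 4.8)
The plan is to reduce everything to a pointwise bound on the largest surviving coordinate. First I would assume without loss of generality that the coordinates of $a$ are sorted in non-increasing absolute value, so that $|a_1|\geq |a_2|\geq\cdots\geq|a_n|$ and $a_{\tail(c)}$ is supported exactly on coordinates $c+1,\dots,n$. The key elementary estimate is that the $(c+1)$-st largest absolute value is controlled by the $\ell_p$-mass of the top $c$ coordinates: since $|a_{c+1}|\leq |a_j|$ for every $j\leq c$, we get $c\,|a_{c+1}|^p \leq \sum_{j=1}^{c}|a_j|^p \leq \|a\|_p^p$, hence $|a_{c+1}| \leq c^{-1/p}\|a\|_p$. (Here I use that $c\geq 1$ is an integer and $c<n$, so coordinate $c+1$ exists.)

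Next I would exploit that $q>p$ to trade the extra exponent against this bound. For each $i\geq c+1$ write $|a_i|^q = |a_i|^{q-p}\cdot|a_i|^p$, and bound $|a_i|^{q-p}\leq |a_{c+1}|^{q-p}$, which is valid precisely because $q-p>0$ and $|a_i|\leq|a_{c+1}|$ for $i\geq c+1$. Summing over $i\geq c+1$ gives
\[
\|a_{\tail(c)}\|_q^q \;=\; \sum_{i=c+1}^{n}|a_i|^q \;\leq\; |a_{c+1}|^{q-p}\sum_{i=c+1}^{n}|a_i|^p \;\leq\; |a_{c+1}|^{q-p}\,\|a\|_p^p.
\]

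Finally I would substitute the bound $|a_{c+1}|\leq c^{-1/p}\|a\|_p$ to obtain $\|a_{\tail(c)}\|_q^q \leq c^{-(q-p)/p}\,\|a\|_p^{q-p}\,\|a\|_p^p = c^{-(q-p)/p}\,\|a\|_p^q$, and then take $q$-th roots, using the identity $\tfrac{q-p}{pq} = \tfrac{1}{p}-\tfrac{1}{q}$ to rewrite the exponent, which yields exactly $\|a_{\tail(c)}\|_q \leq c^{-(\frac1p-\frac1q)}\|a\|_p$. There is no real obstacle here; the only things to be careful about are the direction of the inequality $|a_i|^{q-p}\leq|a_{c+1}|^{q-p}$ (which relies on $p<q$) and the bookkeeping of the exponents, together with a remark that the sorting assumption and tie-breaking are harmless since both sides of the claimed inequality are invariant under permuting the coordinates of $a$.
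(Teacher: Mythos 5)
Your proof is correct and takes essentially the same approach as the paper: sort the coordinates, bound $|a_{c+1}|$ by $c^{-1/p}\|a\|_p$, factor $|a_i|^q = |a_i|^{q-p}|a_i|^p$ and pull out $|a_{c+1}|^{q-p}$, then combine. The only cosmetic difference is that the paper also explicitly records the $q=\infty$ case (which follows directly from the pointwise bound $\|a_{\tail(c)}\|_\infty = |a_{c+1}| \le c^{-1/p}\|a\|_p$), whereas your write-up states the argument only for finite $q$; this is immaterial.
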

\begin{proof}[Proof of Lemma~\ref{lem:genera_NS_tail}]
Without loss of generality, assume that $a_1\geq a_2\geq ...\geq a_n\geq 0$. Note that $\|a_{\tail(c)}\|_\infty = a_{c+1}\leq \frac{\|a\|_p}{c^{1/p}}$, which proves the $q=\infty$ case. For finite $q$,
\[
\|a_{\tail(c)}\|_q^q = a_{c+1}^q +...+ a_n^q\leq a_{c+1}^{q-p}(a_{c+1}^p+...+a_n^p)
\leq a_{c+1}^{q-p}\|a\|_p^p \leq c^{-\frac{q-p}{p}}\|a\|_p^q.
\]
\end{proof}

\begin{proof}[Proof of Theorem~\ref{thm:ell_p_reduction}]
Denote by $x'$ an $(\epsilon,\ell_p,s)$-norm sparsifier of $x$.
Thus,
\[
\|x_{\tail(s)}\|_p^p \leq \|x'-x\|_p^p \leq \epsilon^p \|x\|_p^p = \epsilon^p (\|x_{\tail(s)}\|_p^p + \|x_{\head(s)}\|_p^p).
\]
Hence
\[
\|x_{\tail(s)}\|_p \leq \frac{\epsilon}{(1-\epsilon^p)^{1/p}}\|x_{\head(s)}\|_p\leq \frac{\epsilon}{(1-\epsilon^p)^{1/p}} s^{\frac{1}{p}-\frac{1}{q}} \|x_{\head(s)}\|_q.
\]
Take $c=\big(\frac{(1-\epsilon^q)^{1/q}}{(1-\epsilon^p)^{1/p}}\big)^{\frac{1}{\frac{1}{p}-\frac{1}{q}}}\cdot s$ more entries from $x$, i.e, let $x''=x_{\head(c+s)}$.
By Lemma~\ref{lem:genera_NS_tail},
\[
\|x_{\tail(c+s)}\|_q \leq c^{-(\frac{1}{p}-\frac{1}{q})} \|x_{\tail(s)}\|_p \leq 
\frac{\epsilon}{(1-\epsilon^p)^{1/p}}\big(\frac{s}{c}\big)^{\frac{1}{p}-\frac{1}{q}} \|x_{\head(s)}\|_q 
= \frac{\epsilon}{(1-\epsilon^q)^{1/q}}\|x_{\head(s)}\|_q.
\]
Hence $\|x_{\tail(c+s)}\|_q^q \leq \epsilon^q(\|x_{\tail(c+s)}\|_q^q + \|x_{\head(s)}\|_q^q) \leq \epsilon^q \|x\|_q^q$.
Next, we bound $c/s=O(1)$ by the following calculation. 
\begin{align*}
    \ln (c/s)
    &= {\tfrac{1}{\frac{1}{p}-\frac{1}{q}}}\big(\tfrac{1}{q} \ln (1-\epsilon^q)-\tfrac{1}{p} \ln (1-\epsilon^p)\big) \\
    &\leq_* {\tfrac{1}{\frac{1}{p}-\frac{1}{q}}}     \big( \tfrac{\epsilon^p}{p}  -  \tfrac{\epsilon^q}{q} \big) \\
    &= \epsilon^p + {\tfrac{1}{\frac{q}{p}-1}}     \big( \epsilon^p  -  \epsilon^q \big) \\
    &=\epsilon^p + {\tfrac{1}{\frac{q}{p}-1}}     \epsilon^p\big(1   -  e^{(q-p)\ln \epsilon} \big) \\
    &\leq \epsilon^p + {\tfrac{1}{\frac{q}{p}-1}}     \epsilon^p (q-p)\ln \tfrac{1}{\epsilon} \\
    &= \epsilon^p + p     \epsilon^p \ln \tfrac{1}{\epsilon} \\
    &= \epsilon^p (1 + p\ln \tfrac{1}{\epsilon}) \leq 1,
\end{align*}
where the last two inequalities use that $1+y\leq e^y$ for all $y$,
and the first inequality, marked $\leq_*$,
holds since $f(x) = \tfrac{1}{x}\ln(1-\epsilon^x) + \tfrac{\epsilon^x}{x}$ is a decreasing function for $x\geq 1$, which can be verified by considering its derivative,
\begin{align*}
    f'(x) &= -\tfrac{\epsilon^x}{x^2} + \tfrac{\epsilon^x}{x}\ln \epsilon +\tfrac{1}{x}\tfrac{1}{1-\epsilon^x}(-\epsilon^x \ln \epsilon) \\
    &\leq -\tfrac{\epsilon^x}{x^2} + \tfrac{2}{x}(\epsilon^{2x} \ln \tfrac{1}{\epsilon}) \\
    &= \tfrac{\epsilon^x}{x} (-\tfrac{1}{x} + 2\epsilon^{x} \ln \tfrac{1}{\epsilon}) \\
    &= \tfrac{\epsilon^x}{x^2} (-1 + 2e^{-x\ln \tfrac{1}{\epsilon}} x\ln \tfrac{1}{\epsilon}) <0,
\end{align*}
where the last step holds since $y e^{-y}\leq \tfrac{1}{e}$ and by substituting $y=x\ln \tfrac{1}{\epsilon}$.
In conclusion, $c=O(s)$ and $x''$ is an $(\epsilon,\ell_q,O(s))$-norm sparsifier of $x$.
\end{proof}


\section{Schatten Norms}\label{sec:Schatten}

In this section we restate and prove Theorem~\ref{thm:main_forall_pq_separation}.
It shows that Schatten norms do not behave like $\ell_p$ norms, and even for $p<q$, the existence of an $(\epsilon,S_p)$-norm sparsifier does not mean existence of an $(\epsilon,S_q)$-norm sparsifier.

\schattenThm*

The proof is made of four cases, depending on whether $p>q$ (or $p<q$) and on whether $q\geq 2$ (or $1\leq q<2$).
Of these four cases,
perhaps the two most interesting ones are when $p<q$, because they stand in contrast to vectors (for vectors we know by Theorem~\ref{thm:ell_p_reduction} that an $(\epsilon,\ell_p)$-norm sparsifier implies an $(\epsilon,\ell_q)$-norm sparsifier with a similar number of non-zero entries).
While it would make sense to start with the proof of these two most interesting cases ($p<q$), due to some similarities in the proof technique, we organize our proof as described in Table~\ref{table:Schatten_examples}
(starting with the two cases where $q\geq 2$, and then the two cases where $q<2$).

Each of these four cases is proved by providing a matrix $A = A' + B$ that satisfies the following properties:
\begin{enumerate}[label={(\bfseries P\arabic*)}]
    \item $\nnz(A') = O(n)$.\label{P1}
    \item $\|B\|_{S_p} < \epsilon\|A'\|_{S_p}$.\label{P2}
    \item $\|A'\|_{S_q} = \|B\|_{S_q}$.\label{P3}
    \item Any $(2\epsilon_0, S_q)$-norm approximation of $B$ must have $\tilde{\Omega}(n^2)$ non-zero entries.\label{P4}
\end{enumerate}

\begin{lemma}\label{lem:properties}
For $p\neq q$ and $0<\epsilon<1/2$,
if matrices $A'$ and $B$ satisfy Properties~\ref{P1}-\ref{P4}, then $A'$ is an $(\epsilon,S_p)$-norm approximation of $A=A'+B$, and every $(\epsilon_0,S_q)$-norm approximation of $A$ must have $\tilde{\Omega}(n^2)$ non-zero entries,
for a fixed constant $\epsilon_0=0.1$.
\end{lemma}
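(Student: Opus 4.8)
The plan is to obtain both conclusions directly from Properties~\ref{P1}--\ref{P4} by elementary triangle-inequality manipulations; all the substantive work lies in the four constructions that realize these properties (deferred to the subsequent subsections), not in this lemma.

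For the first conclusion I would note that $A-A'=B$, so it suffices to bound $\|B\|_{S_p}$ in terms of $\|A\|_{S_p}$. Property~\ref{P2} gives $\|B\|_{S_p}<\epsilon\|A'\|_{S_p}$, while the triangle inequality gives $\|A'\|_{S_p}=\|A-B\|_{S_p}\le\|A\|_{S_p}+\|B\|_{S_p}$. Combining the two yields $\|B\|_{S_p}<\epsilon\bigl(\|A\|_{S_p}+\|B\|_{S_p}\bigr)$, hence $\|B\|_{S_p}<\tfrac{\epsilon}{1-\epsilon}\|A\|_{S_p}\le 2\epsilon\|A\|_{S_p}$, where the last step uses $\epsilon<\tfrac12$. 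Thus $A'$ is a $(2\epsilon,S_p)$-approximation of $A$; the extraneous factor $2$ is harmless and may be absorbed by reparametrizing $\epsilon$, affecting only the constant hidden in the $O(n)$ sparsity bound and the polylogarithmic range of $\epsilon$.

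For the second conclusion I would let $\hat A$ be an arbitrary $(\epsilon_0,S_q)$-approximation of $A$, i.e.\ $\|A-\hat A\|_{S_q}\le\epsilon_0\|A\|_{S_q}$, and first use Property~\ref{P3} together with the triangle inequality to bound $\|A\|_{S_q}\le\|A'\|_{S_q}+\|B\|_{S_q}=2\|B\|_{S_q}$. The key move is then to subtract the sparse matrix $A'$: since $(\hat A-A')-B=\hat A-A$, we get $\|(\hat A-A')-B\|_{S_q}=\|\hat A-A\|_{S_q}\le\epsilon_0\|A\|_{S_q}\le 2\epsilon_0\|B\|_{S_q}$, so $\hat A-A'$ is a $(2\epsilon_0,S_q)$-approximation of $B$. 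Property~\ref{P4} then forces $\nnz(\hat A-A')=\tilde{\Omega}(n^2)$, and by subadditivity of $\nnz$ together with Property~\ref{P1} we conclude $\nnz(\hat A)\ge\nnz(\hat A-A')-\nnz(A')=\tilde{\Omega}(n^2)-O(n)=\tilde{\Omega}(n^2)$, which is the claimed bound for $\epsilon_0=0.1$.

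Since the lemma is purely a reduction, I do not expect a genuine obstacle here; the only point requiring care is the bookkeeping of the constant factors. In particular, the factor-$2$ slack incurred when passing from $\|A\|_{S_q}$ to $\|B\|_{S_q}$ via Property~\ref{P3} is exactly why Property~\ref{P4} is stated with threshold $2\epsilon_0$ rather than $\epsilon_0$, so that substituting $\epsilon_0=0.1$ leaves $\hat A-A'$ a legitimate $0.2$-approximation of $B$; similarly, the $\epsilon<\tfrac12$ hypothesis is what makes the first part go through with only a factor-$2$ loss in the error parameter.
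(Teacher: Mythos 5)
Your proof is correct and follows essentially the same route as the paper's: bound $\|B\|_{S_p}$ via Property~\ref{P2} and the triangle inequality to get the $(2\epsilon,S_p)$-approximation, then shift any $S_q$-approximation of $A$ by the sparse $A'$ to obtain a $(2\epsilon_0,S_q)$-approximation of $B$ and invoke Properties~\ref{P1} and~\ref{P4}. The only (minor) difference is that you spell out the subadditivity of $\nnz$ in the final step, which the paper leaves implicit.
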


\begin{proof}
By the triangle inequality and Property~\ref{P2}, 
$\|B\|_{S_p}< \epsilon \|A'\|_{S_p}\leq\epsilon(\|B\|_{S_p} + \|A\|_{S_p})$, 
hence
$\|A-A'\|_{S_p} = \|B\|_{S_p} < \epsilon /(1-\epsilon)\|A\|_{S_p}<2\epsilon \|A\|_{S_p}$ and thus by Property~\ref{P1},
$A'$ is a $(2\epsilon, S_p,O(n))$-norm sparsifier of $A$.
By Property~\ref{P3} and the triangle inequality, $\|A\|_{S_q}\leq 2\|B\|_{S_q}$.
Hence, any matrix $\tilde{A}$ that is an $(\epsilon_0, S_q)$-norm approximation of $A$, satisfies $\|A - \tilde{A}\|_{S_q} \leq \epsilon_0 \|A\|_{S_q}\leq 2\epsilon_0 \|B\|_{S_q}$.
Thus, $\tilde{A}-A'$ is a $(2\epsilon_0, S_q)$-norm approximation of $B$.
By Properties~\ref{P1} and \ref{P4}, it follows that $\nnz(\tilde{A}) \geq \tilde{\Omega}(n^2)$.
\end{proof}

\begin{table*}[!t]
\caption{\label{table:Schatten_examples} The four cases proved in Theorem~\ref{thm:main_forall_pq_separation} and the sections containing their proofs.}
\begin{center}
\begin{tabular}{ |c|c|c| }
\hline
        &  $1\leq p<q$                  &     $p>q$ \\
        \hline
 $q\geq 2$  & Section~\ref{sec:small_notto_large_p>2}    & Section~\ref{sec:p>q>2}\\
 \hline
 $1\leq q<2$  & Section~\ref{sec:p<q<2} & Section~\ref{sec:q<min_p_2}\\
 \hline
\end{tabular}
\end{center}
\end{table*}

\subsection{Case $q\geq 2$ and $1\leq p<q$}\label{sec:small_notto_large_p>2}

\begin{theorem}\label{thm:q>p>2}
Fix $q\geq 2$ and $1\leq p<q$. Then for all $n$ and $\epsilon>(\log n)^{-2(\frac{1}{p}-\frac{1}{q})}$,
there is a matrix $A\in\R^{n\times n}$ that has an $(\epsilon, S_p,n)$-norm sparsifier, but every $(\epsilon_0, S_q)$-norm approximation of $A$ must have $\Omega(n^2/\log^2 n)$ non-zero entries, 
for a fixed constant $\epsilon_0=0.1$.
\end{theorem}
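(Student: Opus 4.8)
The plan is to exhibit two matrices $A'$ and $B$ satisfying Properties~\ref{P1}--\ref{P4} and then invoke Lemma~\ref{lem:properties}. With $n$ a power of $2$, let $H_n\in\{-1,+1\}^{n\times n}$ be the Sylvester--Hadamard matrix, whose rows are pairwise orthogonal and each of $\ell_2$-norm $\sqrt n$. Let $r$ be the largest integer with $(r/n)^{\frac1p-\frac1q}<\epsilon$; since $\epsilon>(\log n)^{-2(\frac1p-\frac1q)}$ this gives $r>n/\log^2 n-1=\Omega(n/\log^2 n)$, and also $r<n$. Define $B\in\R^{n\times n}$ to be the matrix whose first $r$ rows are the first $r$ rows of $H_n$ and whose remaining rows are zero, let $A'=\sigma I_n$ with $\sigma=r^{1/q}n^{\frac12-\frac1q}$, and set $A=A'+B$.

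The first three properties are immediate from the singular values. Clearly $\nnz(A')=n$ (this is \ref{P1}). Since the nonzero rows of $B$ are orthogonal with norm $\sqrt n$, we have $BB^\top=nI_r$ on its support, so $B$ has $r$ singular values equal to $\sqrt n$ and the rest zero; hence $\|B\|_{S_t}=r^{1/t}\sqrt n$ for every $t$. As $\|A'\|_{S_t}=\sigma n^{1/t}$, we get $\|A'\|_{S_q}=\sigma n^{1/q}=r^{1/q}\sqrt n=\|B\|_{S_q}$ (this is \ref{P3}) and $\|B\|_{S_p}/\|A'\|_{S_p}=(r/n)^{\frac1p-\frac1q}<\epsilon$ (this is \ref{P2}).

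The heart of the argument is \ref{P4}. Let $\tilde B$ be any $(2\epsilon_0,S_q)$-norm approximation of $B$, i.e.\ $\|B-\tilde B\|_{S_q}\le 0.2\,\|B\|_{S_q}=0.2\,r^{1/q}\sqrt n$, and let $s=\nnz(\tilde B)$ and $t_i=\nnz(\tilde B_i)$, so $\sum_{i=1}^r t_i\le s$ and $t_i\le n$. On the $\ge n-t_i$ coordinates where row $i$ of $\tilde B$ vanishes, row $i$ of $B-\tilde B$ equals row $i$ of $B$, all of whose entries are $\pm1$; hence $\|(B-\tilde B)_i\|_2\ge\sqrt{n-t_i}$ for each $i\le r$. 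Now I would invoke the inequality $\|X\|_{S_q}\ge\big(\sum_i\|X_i\|_2^q\big)^{1/q}$, valid for all $q\ge2$ (it follows from $\|X\|_{S_q}^2=\|XX^\top\|_{S_{q/2}}$ together with the fact that the diagonal of a PSD matrix is majorized by its eigenvalues, so its $\ell_{q/2}$-norm is at most its Schatten $q/2$-norm). This yields
\[
 0.2^q\, r\, n^{q/2}\ \ge\ \|B-\tilde B\|_{S_q}^q\ \ge\ \sum_{i=1}^r\|(B-\tilde B)_i\|_2^q\ \ge\ \sum_{i=1}^r (n-t_i)^{q/2}\ \ge\ r\Big(n-\tfrac{s}{r}\Big)^{q/2},
\]
where the last step is Jensen's inequality applied to the convex decreasing function $t\mapsto(n-t)^{q/2}$ on $[0,n]$ (convex since $q\ge2$). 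If $s>rn$ we are already done, so assume $s\le rn$; raising to the power $2/q$ gives $n-s/r\le 0.2^2 n=0.04\,n$, i.e.\ $s\ge 0.96\,rn$. Consequently every $(\epsilon_0,S_q)$-approximation $\tilde A$ of $A$ has $\nnz(\tilde A)\ge\nnz(\tilde B)-\nnz(A')\ge 0.96\,rn-n=\Omega(rn)=\Omega(n^2/\log^2 n)$, which is \ref{P4}. Lemma~\ref{lem:properties} then completes the proof (using $\nnz(A')=n$ for the exact sparsity, and rerunning with $\epsilon/2$ in place of $\epsilon$, which changes $r$ by only a constant factor, to obtain the stated accuracy $\epsilon$).

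I expect the main obstacle to be choosing $B$ correctly rather than the calculations: it must be dense (otherwise it is trivially sparsifiable) and yet have all nonzero singular values equal, so that $\|B\|_{S_q}$ and the ratio $\|B\|_{S_p}/\|B\|_{S_q}$ are pinned down exactly --- precisely what makes \ref{P2} and \ref{P3} hold. The tempting choice of a ``maximally spread'' flat rank-$r$ matrix has both features but is actually \emph{easy} to approximate in $S_q$ when $q>2$: every Frobenius- or row-norm-based lower bound on $\|B-\tilde B\|_{S_q}$ loses a factor $r^{\frac12-\frac1q}$, which already exceeds $\|B\|_{S_q}$ itself. Concentrating all of $B$'s mass into $r$ fully dense, mutually orthogonal $\pm1$ rows resolves this: each such row is an incompressible flat vector, and for $q\ge2$ the Schatten $q$-norm dominates the $\ell_q$-norm of the vector of row $\ell_2$-norms, so the row-by-row counting above is essentially tight. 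The hypothesis $q\ge2$ enters exactly at that last inequality, which is why the case $1\le q<2$ needs a different gadget, treated in Section~\ref{sec:p<q<2}.
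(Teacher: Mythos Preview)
Your proof is correct and follows the same high-level scheme as the paper (produce $A',B$ satisfying \ref{P1}--\ref{P4} and invoke Lemma~\ref{lem:properties}), but your choice of $B$ and your verification of \ref{P4} differ in an interesting way. The paper takes $B=H_{[m]}\otimes v$ with $m=n/\log^2 n$ and $v$ the all-ones vector of length $\log^2 n$, obtaining a \emph{fully dense} $\pm1$ matrix whose $m$ nonzero singular values equal $\sqrt n\log n$; it then groups the rows into $m$ blocks $U_i$, applies the pinching inequality (Lemma~\ref{lem:pinching_block_diag}) block by block, and uses Lemma~\ref{lem:rank1_spectral_LB} to lower-bound each block contribution. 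You instead take $B$ to be the first $r\approx n/\log^2 n$ rows of $H_n$ padded with zeros, obtaining $r$ singular values equal to $\sqrt n$, and prove \ref{P4} via the row-wise inequality $\|X\|_{S_q}^q\ge\sum_i\|X_i\|_2^q$ (a one-row instance of the same pinching lemma) together with a clean Jensen step. Your route is shorter and avoids the Kronecker machinery and the auxiliary Lemma~\ref{lem:rank1_spectral_LB}; the paper's route has the minor aesthetic advantage that $B$ is dense with $\pm1$ entries, but this plays no role in the argument. Both constructions yield the identical lower bound $\Omega(rn)=\Omega(n^2/\log^2 n)$ and rely on $q\ge2$ at exactly the same place (convexity of $t\mapsto t^{q/2}$).
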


Our proof is based on constructing matrices $A',B$ satisfying \ref{P1}-\ref{P4}. 
We set $A' = a I$ with parameter $a>0$ chosen specifically to satisfy \ref{P3}. 
We construct a matrix $B\in\R^{n\times n}$, having entries in $\{+1,-1\}$ and $m=n^{2(1-\alpha)}$ non-zero singular values all equal to $ n^\alpha \coloneqq \sqrt{n} \cdot \log n $ for a parameter $1>\alpha>1/2$. 
Note that $\|B\|_{S_2}^2 = n^{2(1-\alpha)}(n^\alpha)^2 = n^2$.
The construction is as follows: 
let $v$ be the all-ones column vector of dimension $n^{2\alpha-1}$ and $H_{[m]}$ the matrix made of the first $m$ rows of the $n\times n$ Hadamard matrix.%
\footnote{Hadamard matrices are not known for every $n$, but are known for powers of $2$.
Recall that we assumed $n$ is a power of $2$ in Subsection \ref{subsec:notations}, thus there exists an $n\times n$ Hadamard matrix.}
Then, construct $B= H_{[m]} \otimes v$, where $\otimes$ denotes the Kronecker product,
i.e, all the rows of $B_{U_i}$ are equal to the $i$-th row of $H$, where  $U_i = [(i-1) n^{2\alpha-1} + 1,i\cdot n^{2\alpha-1}]$ for $i\in[1,m]$,
and the number of rows in $B$ is $n^{2\alpha-1}m = n$.
Since the vector $v$ has a single singular value which is just its $\ell_2$-norm and $H_{[m]}$ has $m$ singular values of equal value $\sqrt{n}$, by properties of Kronecker product, $B$ has $m$ singular values all equal to $\sqrt{n^{2\alpha-1}}\sqrt{n} = n^\alpha$, as desired.

We use the following lemmas to show that this matrix $B$ satisfies \ref{P4}.

\begin{lemma}\label{lem:rank1_spectral_LB} 
Every matrix $(B')_{U_i}$ that is an $(\epsilon,S_\infty)$-norm approximation of $B_{U_i}$, must have $\nnz((B')_{U_i})\geq \tfrac{n}{2}$, even for $\epsilon=\tfrac{1}{2}$.
\end{lemma}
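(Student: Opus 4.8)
The plan is to exploit the very rigid structure of $B_{U_i}$: it is an $n^{2\alpha-1}\times n$ matrix all of whose rows equal the single $\pm 1$ row vector $h := H_i$ (the $i$-th row of the Hadamard matrix). Hence $B_{U_i} = \mathbf{1}\, h^{\top}$ where $\mathbf{1}$ is the all-ones column vector of dimension $n^{2\alpha-1}$, so $B_{U_i}$ has rank $1$ with a single nonzero singular value equal to $\|\mathbf{1}\|_2\,\|h\|_2 = \sqrt{n^{2\alpha-1}}\cdot\sqrt{n} = n^\alpha$. For a rank-one target the Schatten $\infty$-norm (spectral norm) is the only singular value, so $\|B_{U_i}\|_{S_\infty} = n^\alpha$, and a $(\tfrac12,S_\infty)$-approximation $B'_{U_i}$ must satisfy $\|B_{U_i}-B'_{U_i}\|_{S_\infty} \le \tfrac12 n^\alpha$.

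First I would pass to a counting argument on columns. Suppose $B'_{U_i}$ has strictly fewer than $n/2$ nonzero entries; then, since $B_{U_i}$ has $n^{2\alpha-1}$ rows, there are more than $n/2$ columns of $B'_{U_i}$ that are entirely zero (indeed even restricting attention to the first row of $B'_{U_i}$, more than $n/2$ of its entries are zero). Let $T\subseteq[n]$ be such a set of zero columns with $|T| > n/2$. Restricting to these columns, $(B'_{U_i})^{(T)} = 0$, so the error matrix restricted to $T$ equals $(B_{U_i})^{(T)} = \mathbf{1}\,(h_T)^{\top}$, again a rank-one matrix whose spectral norm is $\|\mathbf{1}\|_2\,\|h_T\|_2 = \sqrt{n^{2\alpha-1}}\cdot\sqrt{|T|} > \sqrt{n^{2\alpha-1}}\cdot\sqrt{n/2} = n^\alpha/\sqrt{2}$. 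Since deleting columns cannot increase the spectral norm, $\|B_{U_i}-B'_{U_i}\|_{S_\infty} \ge \|(B_{U_i}-B'_{U_i})^{(T)}\|_{S_\infty} > n^\alpha/\sqrt{2} > \tfrac12 n^\alpha$, contradicting the approximation guarantee. (One must be a touch careful that $T$ is chosen among truly all-zero columns of $B'_{U_i}$, not merely columns where the first row vanishes; but any column with $<n/2$ total nonzeros still leaves $>n/2$ fully-zero columns, so the same bound goes through.)

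The only mild subtlety — and the step I would be most careful about — is the monotonicity claim that zeroing out a set of columns does not increase the spectral norm; this is standard (the spectral norm of a submatrix is at most that of the full matrix, since $\|M^{(T)}\|_{S_\infty} = \max_{\|y\|_2=1}\|M^{(T)} y\|_2 = \max \|M(P_T y)\|_2 \le \|M\|_{S_\infty}$, where $P_T$ is the coordinate projection), so it can be invoked in one line. Everything else is direct computation with the rank-one structure and a pigeonhole count on columns, and the bound holds with the stated slack (the $\tfrac12$ versus $1/\sqrt2$ gap), so nothing delicate remains.
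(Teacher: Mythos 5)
Your proof is correct and follows essentially the same route as the paper's: a pigeonhole count forces at least $n/2$ all-zero columns in $B'_{U_i}$, restricting to those columns recovers the rank-one block $B_{U_i}$, whose spectral norm on that submatrix is at least $\sqrt{(n/2)\cdot n^{2\alpha-1}} = n^\alpha/\sqrt{2} > \tfrac12\|B_{U_i}\|_{S_\infty}$, a contradiction via the column-deletion monotonicity of $\|\cdot\|_{S_\infty}$. The paper states the monotonicity step as $\|(B'-B)_{U_i}\|_{S_\infty}=\max_{\|x\|_2=1}\|(B'-B)_{U_i}x\|_2\ge\|(B'-B)^Z_{U_i}\|_{S_\infty}$, which is the same projection argument you spell out.
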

\begin{proof}

Assume by contradiction that $(B')_{U_i}$ is a $(\tfrac{1}{2},S_\infty)$-norm approximation of $B_{U_i}$ and that $\nnz((B')_{U_i}) < \tfrac{n}{2}$.
Then, there is a set of columns $Z$ of size at least $\tfrac{n}{2}$, such that 
$(B')_{U_i}^{Z}=0$.
Hence, 
\begin{align*}
\|(B'-B)_{U_i}\|_{S_\infty}
&=\max_{\|x\|_2=1} \|(B'-B)_{U_i}x\|_2
\geq \|(B'-B)_{U_i}^Z\|_{S_\infty}\\
&= \|B_{U_i}^Z\|_{S_\infty}
\geq \sqrt{\tfrac{n}{2}  \cdot  n^{2\alpha-1}}
= \tfrac{n^\alpha}{\sqrt{2}} = \tfrac{\|B_{U_i}\|_{S_\infty}}{\sqrt{2}},
\end{align*}
a contradiction.
Hence every $(\tfrac{1}{2},S_\infty)$-norm approximation of $B_{U_i}$ must have at least $\tfrac{n}{2}$ non-zero entries.
\end{proof}

Our proof builds on the Pinching inequality,
which we state first for completeness. 

\begin{lemma}[Pinching inequality~\cite{DBLP:journals/tamm/Bhatia00} (see also \cite{bhatia2002pinchings})]\label{lem:pinching_bhatia}
For all $A\in\R^{n\times n}$, $p\geq 1$ and a collection of $k$ disjoint subsets $I_j\subset [n]$ for $j = 1,...,k$,
\[
\|A\|_{S_p}^p \geq \sum_{j=1}^k \|A_{I_j}^{I_j}\|_{S_p}^p.
\]
\end{lemma}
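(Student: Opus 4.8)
The plan is to prove the Pinching inequality by the classical averaging-of-conjugations argument. First I would reduce to the case where the given subsets partition $[n]$: put $I_0 := [n]\setminus\bigcup_{j=1}^{k} I_j$, and note that since $\|A_{I_0}^{I_0}\|_{S_p}^p\geq 0$, it suffices to prove the stronger inequality $\|A\|_{S_p}^p\geq \sum_{j=0}^{k}\|A_{I_j}^{I_j}\|_{S_p}^p$. For each $j\in\{0,1,\dots,k\}$ let $P_j\in\R^{n\times n}$ be the diagonal $0/1$ matrix that projects onto the coordinates indexed by $I_j$; then $P_jP_{j'}=\delta_{jj'}P_j$ and $\sum_{j=0}^{k}P_j=I$.

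Next I would introduce the \emph{pinching} $\mathcal{C}(A):=\sum_{j=0}^{k}P_jAP_j$ and exhibit it as an average of orthogonal conjugates of $A$. For a sign vector $\sigma\in\{-1,+1\}^{k}$ set $D_\sigma:=P_0+\sum_{j=1}^{k}\sigma_jP_j$, a diagonal $\pm1$ matrix, hence real orthogonal. Expanding $D_\sigma A D_\sigma=\sum_{i,i'}\tau_i\tau_{i'}\,P_iAP_{i'}$ with the convention $\tau_0=1$ and $\tau_j=\sigma_j$, and averaging over $\sigma$ drawn uniformly from $\{-1,+1\}^{k}$, every off-diagonal term $i\neq i'$ cancels (at least one of these indices lies in $\{1,\dots,k\}$, and the corresponding sign averages to $0$), while each diagonal term $i=i'$ is unchanged; thus $\tfrac{1}{2^{k}}\sum_{\sigma}D_\sigma A D_\sigma=\sum_{j=0}^{k}P_jAP_j=\mathcal{C}(A)$, a convex combination of orthogonal conjugates of $A$.

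Finally I would combine unitary invariance and convexity of $\|\cdot\|_{S_p}$, both valid for $p\geq 1$. Since each $D_\sigma$ is orthogonal, $\|D_\sigma A D_\sigma\|_{S_p}=\|A\|_{S_p}$, so the triangle inequality gives $\|\mathcal{C}(A)\|_{S_p}\leq\tfrac{1}{2^{k}}\sum_{\sigma}\|D_\sigma A D_\sigma\|_{S_p}=\|A\|_{S_p}$. On the other hand, after simultaneously permuting rows and columns to group coordinates by block, $\mathcal{C}(A)$ is block-diagonal with diagonal blocks exactly $A_{I_0}^{I_0},\dots,A_{I_k}^{I_k}$, so its multiset of singular values is the disjoint union of those of the blocks, whence $\|\mathcal{C}(A)\|_{S_p}^p=\sum_{j=0}^{k}\|A_{I_j}^{I_j}\|_{S_p}^p$. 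Chaining these two facts,
\[
\|A\|_{S_p}^p\;\geq\;\|\mathcal{C}(A)\|_{S_p}^p\;=\;\sum_{j=0}^{k}\|A_{I_j}^{I_j}\|_{S_p}^p\;\geq\;\sum_{j=1}^{k}\|A_{I_j}^{I_j}\|_{S_p}^p,
\]
which is the claim. The only step demanding care is the averaging identity of the second paragraph, in particular the bookkeeping of the leftover block $I_0$ whose sign is held fixed at $+1$; beyond that, the argument is a routine application of the triangle inequality and unitary invariance of Schatten norms, so I foresee no genuine obstacle.
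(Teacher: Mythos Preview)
Your proof is correct and is precisely the classical averaging-of-conjugations argument; in fact the paper does not supply any proof of this lemma at all, merely citing it from Bhatia~\cite{DBLP:journals/tamm/Bhatia00,bhatia2002pinchings} as a known result. Your write-up is essentially the proof found in those references, so there is nothing to compare against beyond noting that you have filled in what the paper leaves as a citation.
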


\begin{lemma}\label{lem:pinching_block_diag}
For $q\geq 2$, a matrix $A\in\R^{n\times d}$ and a disjoint collection of sets $U_i\subset [d]$, it holds that $\|A\|_{S_q}^q\geq \sum_i \|A^{(U_i)}\|_{S_\infty}^q$.
\end{lemma}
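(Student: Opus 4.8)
The plan is to reduce the claim to the Pinching inequality (Lemma~\ref{lem:pinching_bhatia}) by passing to the Gram matrix, the point being that a \emph{column} submatrix of $A$ becomes a \emph{principal} submatrix once we form $A^\top A$. Concretely, I would set $M \coloneqq A^\top A \in \R^{d\times d}$, which is PSD, and record two identities. First, the eigenvalues of $M$ are exactly the squared singular values of $A$, so $\|A\|_{S_q}^q = \sum_k \sigma_k(A)^q = \sum_k \lambda_k(M)^{q/2} = \|M\|_{S_{q/2}}^{q/2}$. Second, for each $i$ the principal submatrix $M_{U_i}^{U_i}$ equals $(A^{(U_i)})^\top A^{(U_i)}$, whence by the same reasoning $\|M_{U_i}^{U_i}\|_{S_{q/2}}^{q/2} = \|A^{(U_i)}\|_{S_q}^q$.

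Next I would use the hypothesis $q\geq 2$, which gives $q/2 \geq 1$, so that Lemma~\ref{lem:pinching_bhatia} applies to $M$ with exponent $p=q/2$ and the disjoint index sets $U_i\subset[d]$, yielding $\|M\|_{S_{q/2}}^{q/2} \geq \sum_i \|M_{U_i}^{U_i}\|_{S_{q/2}}^{q/2}$. Substituting the two identities above turns this into $\|A\|_{S_q}^q \geq \sum_i \|A^{(U_i)}\|_{S_q}^q$. To finish, I would apply the trivial monotonicity $\|C\|_{S_q}\geq \|C\|_{S_\infty}$ (the $\ell_q$-norm of the singular values dominates their maximum) to each $C = A^{(U_i)}$, obtaining the desired $\|A\|_{S_q}^q \geq \sum_i \|A^{(U_i)}\|_{S_\infty}^q$.

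There is no genuinely hard step: the only things to get right are the bookkeeping that identifies $M_{U_i}^{U_i}$ with $(A^{(U_i)})^\top A^{(U_i)}$, and the remark that the assumption $q\geq 2$ is precisely what is needed so that $q/2\geq 1$ and Lemma~\ref{lem:pinching_bhatia} can be invoked. (This also explains why the argument must be organized around the threshold $q=2$, with the $q<2$ regime handled by the separate constructions.)
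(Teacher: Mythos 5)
Your proof is correct and follows essentially the same route as the paper's: pass to the Gram matrix $A^\top A$, apply the pinching inequality with exponent $q/2\geq 1$, and then drop from a Schatten norm to the operator norm. The only cosmetic difference is the order in which you perform the last two steps — you convert $\|M_{U_i}^{U_i}\|_{S_{q/2}}^{q/2}$ back to $\|A^{(U_i)}\|_{S_q}^q$ before invoking monotonicity of Schatten norms, whereas the paper bounds $\|(A^\top A)_{U_i}^{U_i}\|_{S_{q/2}}$ by its spectral norm first — which amounts to the same inequality applied to the same matrices.
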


\begin{proof}

By Lemma~\ref{lem:pinching_bhatia} and since $\| X\|_{S_{q/2}}
\geq \| X\|_{S_{\infty}}$ for every matrix $X$,
\[
\|A\|_{S_q}^q 
= \|A^\top A\|_{S_{q/2}}^{q/2} 
\geq \sum_i\| (A^\top A)_{U_i}^{U_i}\|_{S_{q/2}}^{q/2} 
\geq \sum_i\| (A^\top A)_{U_i}^{U_i}\|_{S_{\infty}}^{q/2} 
= \sum_i \|A^{(U^i)}\|_{S_{\infty}}^q.
\]
\end{proof}

\begin{lemma}\label{lem:B_hard_for_p>2} 
For all $q\geq 2$, 
every $(\tfrac{1}{4},S_q)$-norm approximation of the matrix $B$
must have $\Omega(n m)$ non-zero entries.
\end{lemma}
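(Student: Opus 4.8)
The plan is to reduce the lower bound for $B$ to the block-level lower bounds already established. Suppose $B'$ is a $(\tfrac14, S_q)$-norm approximation of $B$. Recall $B = H_{[m]} \otimes v$, and that $B$ decomposes into $m$ horizontal strips $B_{U_i}$ (indexed by the row blocks $U_i$), each of which is a rank-one matrix with $\|B_{U_i}\|_{S_\infty} = n^\alpha$, while $\|B\|_{S_q}^q = m (n^\alpha)^q$. First I would apply Lemma~\ref{lem:pinching_block_diag}, but with the roles of rows and columns transposed (i.e.\ applied to $B^\top$ with the collection $\{U_i\}$ of row blocks), to get $\|B'-B\|_{S_q}^q \geq \sum_i \|(B'-B)_{U_i}\|_{S_\infty}^q$, where $(B'-B)_{U_i}$ denotes the restriction to the rows in $U_i$. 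Since $\|B'-B\|_{S_q}^q \leq (\tfrac14)^q \|B\|_{S_q}^q = (\tfrac14)^q m (n^\alpha)^q$, an averaging argument shows that for at least, say, half of the indices $i \in [m]$ we have $\|(B'-B)_{U_i}\|_{S_\infty} \leq \tfrac12 n^\alpha = \tfrac12 \|B_{U_i}\|_{S_\infty}$; call these the \emph{good} strips.

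Next I would invoke Lemma~\ref{lem:rank1_spectral_LB}: for each good strip $i$, the matrix $(B')_{U_i}$ is a $(\tfrac12, S_\infty)$-norm approximation of $B_{U_i}$, hence has at least $\tfrac{n}{2}$ non-zero entries. Summing over the $\geq m/2$ good strips gives $\nnz(B') \geq \tfrac{m}{2}\cdot\tfrac{n}{2} = \Omega(nm)$, as claimed. The argument is essentially a ``split the error budget across blocks, most blocks stay close, each close block is expensive'' scheme.

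The one technical point to get right is the constant bookkeeping: Lemma~\ref{lem:rank1_spectral_LB} is stated with tolerance $\tfrac12$, so I need the good strips to satisfy the $\tfrac12$-bound, which is why the averaging must lose a factor bounded away from $1$ — starting from a $\tfrac14$ overall guarantee gives plenty of room (if a fraction $\beta$ of strips were bad, then $\sum_{\text{bad }i}\|(B'-B)_{U_i}\|_{S_\infty}^q > \beta m (\tfrac12 n^\alpha)^q$, forcing $\beta (\tfrac12)^q < (\tfrac14)^q$, i.e.\ $\beta < 2^{-q} \leq \tfrac14$, so at least three quarters of the strips are good). I expect no real obstacle here; the main thing is to make sure the transposed application of the pinching-type Lemma~\ref{lem:pinching_block_diag} is legitimate, which it is since $\|X\|_{S_q} = \|X^\top\|_{S_q}$ and the lemma applies to any matrix with any disjoint collection of \emph{column} index sets.
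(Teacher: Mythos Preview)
Your proposal is correct and follows essentially the same approach as the paper: apply the (transposed) pinching-type Lemma~\ref{lem:pinching_block_diag} to decompose the error across the row strips $U_i$, then combine with Lemma~\ref{lem:rank1_spectral_LB} via an averaging argument. The only cosmetic difference is that the paper runs the argument in the contrapositive (assume $\nnz(B')\le nm/4$, average over sparsity to find many sparse strips, and conclude the error exceeds $\tfrac14\|B\|_{S_q}$), whereas you average over the error to find many ``good'' strips and then lower-bound the sparsity directly; the ingredients and constants are the same.
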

\begin{proof}
Note that $\|B\|_{S_q} = m^{1/q} n^\alpha$.
Consider $B'$ that has $s_i$ non-zero entries in the rows indexed by $U_i$.
By Lemma~\ref{lem:pinching_block_diag}, 
\[
\|B'-B\|_{S_q}^q
\geq \sum_i \|(B'-B)_{U_i}\|_{S_\infty}^q.
\]
Even if $\nnz(B')=\tfrac{n m}{4}$, there are at least $\tfrac{m}{2}$ sets $U_i$ where $s_i\leq \tfrac{n}{2}$. 
Hence, for these sets, by Lemma~\ref{lem:rank1_spectral_LB},
$\|(B'-B)_{U_i}\|_{S_\infty} > \tfrac{1}{2} \|(B)_{U_i}\|_{S_\infty} = \tfrac{1}{2} n^\alpha$,
thus the sum above is at least 
$\tfrac{m}{2} (\tfrac{1}{2})^q  n^{\alpha q}=(\tfrac{1}{2})^{q+1} \|B\|_{S_q}^q$, which concludes the proof of the lemma.
\end{proof}

\begin{proof}[Proof of Theorem~\ref{thm:q>p>2}]
Since $n^\alpha = \sqrt{n} \log n$, then 
$m=\tfrac{n}{\log^2 n}$ and $\|B\|_{S_p} = m^\frac{1}{p}n^\alpha =n^{\frac{1}{2}+\frac{1}{p}}(\log n)^{1-\frac{2}{p}}$.
The statement of the theorem holds when $\epsilon > (\log n)^{-2(\frac{1}{p}-\frac{1}{q})}$.

Let $A=\sqrt{n}(\log n)^{1-\frac{2}{q}} I + B$, i.e, set $A'\coloneqq \sqrt{n}(\log n)^{1-\frac{2}{q}} I$.
We now verify
\begin{enumerate}[label={\bfseries P\arabic*}]
    \item $\nnz(A') = n$.
    \item $\|\tfrac{1}{\sqrt{n}(\log n)^{1-\frac{2}{q}}}B\|_{S_p} = n^{\frac{1}{p}} (\log n)^{-2(\frac{1}{p}-\frac{1}{q})} < \epsilon \|I\|_{S_p}$.
    \item $\|\tfrac{1}{\sqrt{n}(\log n)^{1-\frac{2}{q}}}B\|_{S_q} = n^{\frac{1}{q}} = \|I\|_{S_q}$.
    \item follows from Lemma~\ref{lem:B_hard_for_p>2}.
\end{enumerate}
The matrices $A'$ and $B$ satisfy \ref{P1}-\ref{P4}, and by Lemma~\ref{lem:properties}, this concludes the proof of Theorem~\ref{thm:q>p>2}.
\end{proof}

\subsection{Case $p>q\geq 2$}\label{sec:p>q>2}

\begin{theorem}\label{thm:p>q>2}
Fix $p> q\geq 2$. Then for all $n$ and $\epsilon>n^{-(\frac{1}{q}-\frac{1}{p})}$,
there is a matrix $A\in\R^{n\times n}$ that has an $(\epsilon, S_p,1)$-norm sparsifier, but every $(\epsilon_0, S_q)$-norm approximation of $A$ must have $\Omega(n^2)$ non-zero entries,
for a fixed constant $\epsilon_0=0.1$.
\end{theorem}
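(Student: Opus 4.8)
The strategy is to build $A',B$ satisfying Properties \ref{P1}--\ref{P4} from Lemma~\ref{lem:properties}, with the new feature that the sparsifier now consists of a \emph{single} non-zero entry (sparsity $1$), reflecting the fact that for $p>q$ the hard direction is driven by a rank-one ``spike.'' Concretely, I would take $B$ to be a flat, full-rank matrix whose singular values are all equal --- the natural candidate is a scaled $n\times n$ Hadamard matrix $H$, which has all $n$ singular values equal to $\sqrt n$ and entries in $\{\pm1\}$, so $\nnz(B)=n^2$ and $\|B\|_{S_r}= n^{1/r}\sqrt n$ for every $r$. For $A'$ I would take a single-entry matrix $A' = a\, e_1 e_1^\top$ (or $a$ times any rank-one sign matrix), whose only nonzero singular value is (up to normalization) $a$, so $\nnz(A')=1$, giving \ref{P1}, and $\|A'\|_{S_r}=a$ for all $r$.

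**Matching the norms.** Property \ref{P3} asks $\|A'\|_{S_q}=\|B\|_{S_q}$, i.e.\ $a = n^{1/q}\sqrt n = n^{1/2+1/q}$; this is just a choice of scaling. Then Property \ref{P2} requires $\|B\|_{S_p} < \epsilon\|A'\|_{S_p}$, that is $n^{1/p}\sqrt n < \epsilon\, a = \epsilon\, n^{1/2+1/q}$, i.e.\ $n^{1/p}<\epsilon\, n^{1/q}$, which is exactly the hypothesis $\epsilon > n^{-(1/q-1/p)}$ (using $p>q$ so the exponent $1/q-1/p$ is positive). So \ref{P2} and \ref{P3} reduce to the single scaling choice together with the stated lower bound on $\epsilon$, and there is no $\log$ loss here (in contrast to the $p<q$ case), which is why the theorem states $\epsilon > n^{-(1/q-1/p)}$ rather than a polylog bound.

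**The incompressibility of $B$ (Property \ref{P4}).** The main work is showing that every $(\tfrac14,S_q)$-norm approximation $B'$ of $B=cH$ has $\Omega(n^2)$ nonzero entries. Since $q\ge 2$, I would use Lemma~\ref{lem:pinching_block_diag} (or just the Pinching inequality, Lemma~\ref{lem:pinching_bhatia}, applied to the full matrix) to lower-bound $\|B'-B\|_{S_q}$ by a sum of spectral/Frobenius contributions of submatrices of the error, and argue that a sparse $B'$ must leave a large, essentially structureless block of $H$ almost untouched. Quantitatively: $\|H\|_{S_q}^q = n\cdot n^{q/2}$; if $\nnz(B')\le \tfrac{1}{100}n^2$, then a constant fraction of the rows of $B'$ have at most, say, $n/2$ nonzeros, so on those rows the error $B'-B$ contains an $(n/2)$-column submatrix of $H$; since every tall submatrix of a Hadamard matrix has spectral norm (and, better, Frobenius-type mass) at least $\Omega(\sqrt n)$ per row, pinching over these rows gives $\|B'-B\|_{S_q}^q = \Omega(n\cdot n^{q/2})$, contradicting the approximation guarantee for a small enough absolute constant. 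The technical crux --- and the step I expect to be the real obstacle --- is to get a clean submatrix spectral lower bound that survives the pinching \emph{and} is robust to the adversary's freedom in \emph{where} to place the nonzeros of $B'$; the cleanest route is to observe that any column submatrix $H^{(Z)}$ with $|Z|\ge n/2$ has all its singular values equal to $\sqrt{|Z|}\ge \sqrt{n/2}$ (columns of $H$ are orthogonal), so $\|H^{(Z)}\|_{S_\infty}=\sqrt{|Z|}$, and then Lemma~\ref{lem:pinching_block_diag} applied with the row blocks as the disjoint sets turns this into the desired $S_q$ bound. Finally, invoking Lemma~\ref{lem:properties} with these $A',B$ yields an $(\epsilon,S_p,1)$-norm sparsifier of $A=A'+B$ while every $(\epsilon_0,S_q)$-approximation of $A$ needs $\Omega(n^2)$ nonzeros, which is Theorem~\ref{thm:p>q>2}.
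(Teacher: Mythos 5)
Your construction and argument are exactly the paper's: $A'$ is a scaled single-entry matrix ($J^1$ up to scaling), $B$ is a scaled Hadamard matrix, P1--P3 follow from the same scaling calculation, and P4 is proved by the same row-wise pinching via Lemma~\ref{lem:pinching_block_diag}. One small slip worth flagging: the singular values of the $n\times|Z|$ column submatrix $H^{(Z)}$ are all $\sqrt{n}$, not $\sqrt{|Z|}$ (its columns are orthogonal of length $\sqrt n$); what the pinching argument actually uses is the $\ell_2$-norm of a \emph{single row} of $H$ restricted to $|Z|\ge n/2$ columns, which is indeed $\sqrt{|Z|}\ge\sqrt{n/2}$, so your numerics are correct even though the phrase describes the wrong submatrix.
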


Again, the proof is based on constructing matrices $A',B$ satisfying \ref{P1}-\ref{P4}. 
We set $A'= J^1$, a matrix having a single non-zero entry of value $1$ 
located at entry $(1,1)$;
and set $B = a H$, a Hadamard matrix scaled by suitable $a>0$ to satisfy \ref{P3}.

We get the following from Lemma~\ref{lem:B_hard_for_p>2}, by setting $\alpha=\tfrac{1}{2}$ and $m=n$.

\begin{lemma}\label{lem:hard_for_p>2}
For all $q\geq 2$, 
every $(\frac{1}{4},S_q)$-norm approximation of an $n\times n$ Hadamard matrix $H$
must have $\Omega(n^2)$ non-zero entries.
\end{lemma}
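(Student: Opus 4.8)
The plan is to obtain Lemma~\ref{lem:hard_for_p>2} as an immediate corollary of Lemma~\ref{lem:B_hard_for_p>2} by specializing the parameters of the construction used there. Recall that in Section~\ref{sec:small_notto_large_p>2} the matrix $B$ was built as $B = H_{[m]} \otimes v$ with $m = n^{2(1-\alpha)}$ and a blow-up factor $n^{2\alpha-1}$ coming from the all-ones vector $v$, and Lemma~\ref{lem:B_hard_for_p>2} asserts that any $(\tfrac14,S_q)$-norm approximation of such a $B$ has $\Omega(nm)$ nonzero entries. The key observation is that setting $\alpha = \tfrac12$ makes $v$ a scalar (dimension $n^{2\alpha-1} = 1$), so the Kronecker product becomes trivial and $B = H_{[m]} = H_{[n]} = H$ is exactly the full $n\times n$ Hadamard matrix, while simultaneously $m = n^{2(1-\alpha)} = n$. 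Substituting $m = n$ into the $\Omega(nm)$ bound of Lemma~\ref{lem:B_hard_for_p>2} yields $\Omega(n^2)$, which is precisely the claim.

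First I would note that the hypothesis $q \geq 2$ carries over verbatim, since that is exactly the regime in which Lemma~\ref{lem:pinching_block_diag} — and hence Lemma~\ref{lem:B_hard_for_p>2} — is valid. Second, I would check that the chain of reasoning behind Lemma~\ref{lem:B_hard_for_p>2} still makes sense when $\alpha = \tfrac12$: the sets $U_i$ become singletons $\{i\}$, so $B_{U_i}$ is just the $i$-th row of $H$, a rank-one matrix with spectral norm $\sqrt{n}$; the pigeonhole step still guarantees that, when $\nnz(B') < \tfrac14 n^2$, at least $\tfrac{n}{2}$ of these rows have fewer than $\tfrac{n}{2}$ nonzeros in the approximation, and Lemma~\ref{lem:rank1_spectral_LB} (with $n^{2\alpha-1}=1$) forces each such row to contribute error at least $\tfrac12\sqrt{n}$ in spectral norm. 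Summing via Lemma~\ref{lem:pinching_block_diag} then reproduces the $(\tfrac12)^{q+1}\|H\|_{S_q}^q$ lower bound on the $q$-th power of the approximation error, contradicting the $\tfrac14$-approximation assumption.

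Consequently, the proof is a single sentence: apply Lemma~\ref{lem:B_hard_for_p>2} with $\alpha = \tfrac12$ and $m = n$, so that $B$ coincides with $H$ and the bound $\Omega(nm)$ becomes $\Omega(n^2)$. There is essentially no obstacle here — the only thing to be careful about is to confirm that every quantity appearing in the general construction degenerates gracefully at $\alpha = \tfrac12$ (in particular that $n^{2\alpha-1}$, the row-multiplicity, equals $1$ rather than causing a division issue), and that the constant $\tfrac14$ in the approximation guarantee is inherited unchanged. Since all of these are already established for general $1 > \alpha > \tfrac12$ and the statements extend continuously to the boundary case $\alpha = \tfrac12$, the corollary follows directly.
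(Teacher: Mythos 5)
Your proposal is correct and is exactly the paper's own argument: the paper also derives this lemma as an immediate corollary of Lemma~\ref{lem:B_hard_for_p>2} by setting $\alpha = \tfrac12$ and $m = n$. Your extra checks that the construction degenerates gracefully at $\alpha=\tfrac12$ are sound but not strictly needed, since Lemma~\ref{lem:B_hard_for_p>2} as stated already covers this boundary instance.
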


\begin{proof}[Proof of Theorem~\ref{thm:p>q>2}]
Set $a = n^{-\frac{1}{q}-\frac{1}{2}}$, i.e,
set $A'\coloneqq J^1$ and $B\coloneqq n^{-\frac{1}{q}-\frac{1}{2}} H$.
We now verify
\begin{enumerate}[label={\bfseries P\arabic*}]
    \item $\nnz(J^1) = 1$.
    \item $\|n^{-\frac{1}{q}-\frac{1}{2}} H\|_{S_p} = n^{\frac{1}{p}-\frac{1}{q}} \leq \epsilon = \epsilon \|J^1\|_{S_p}$.
    \item $\|J^1\|_{S_q} = 1 = \|n^{-\frac{1}{q}-\frac{1}{2}} H\|_{S_q}$.
    \item follows from Lemma~\ref{lem:hard_for_p>2}.
\end{enumerate}
The matrices $A'$ and $B$ satisfy \ref{P1}-\ref{P4}, and by Lemma~\ref{lem:properties}, this concludes the proof of Theorem~\ref{thm:p>q>2}.
\end{proof}

\subsection{Case $1\leq p < q\leq 2$}\label{sec:p<q<2}

\begin{theorem}\label{thm:p<q<2}
Fix $1\leq p < q\leq 2$. Then for all $n$ and $\epsilon>n^{-(\frac{1}{p} - \frac{1}{q})}$, 
there is a matrix $A\in\R^{n\times n}$ that has an $(\epsilon, S_p,n)$-norm sparsifier, but every $(\epsilon_0, S_q)$-norm approximation of $A$ must have $\Omega(n^2)$ non-zero entries,
for a fixed constant $\epsilon_0=0.1$.
\end{theorem}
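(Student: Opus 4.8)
The strategy mirrors the two previous cases: construct matrices $A'$ and $B$ satisfying Properties~\ref{P1}--\ref{P4}, and invoke Lemma~\ref{lem:properties}. The difficulty in the range $1\le p<q\le 2$ is that Lemma~\ref{lem:pinching_block_diag}, which drove the lower bound in the cases $q\ge 2$, relied crucially on $q\ge 2$ (it passed through $A^\top A$ and used $\|X\|_{S_{q/2}}\ge\|X\|_{S_\infty}$). For $q<2$ we need a different mechanism to force any $(\epsilon_0,S_q)$-norm approximation of the hard matrix to be dense. Since $p<q$, the example must again stand in contrast to the vector case, so the hard matrix $B$ should be one whose Schatten-$q$ mass is genuinely spread out while its Schatten-$p$ mass is tiny relative to $\|A'\|_{S_p}$.

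**Construction I would try.** Take $A' = aI$ as in Section~\ref{sec:small_notto_large_p>2}, with $a$ chosen so that \ref{P3} holds, and let $B$ be a scaled Hadamard matrix $B = b H$ (so $B$ has all $n$ singular values equal to $b\sqrt n$; the identity has all singular values equal to $a$). Then $\|A'\|_{S_p}=a n^{1/p}$, $\|B\|_{S_p}=b\sqrt n\, n^{1/p}$, $\|A'\|_{S_q}=a n^{1/q}$, $\|B\|_{S_q}=b\sqrt n\, n^{1/q}$. Property~\ref{P3} forces $a = b\sqrt n$, and then $\|B\|_{S_p}/\|A'\|_{S_p} = 1$, which is useless — the flat spectrum of $H$ makes $p$ and $q$ behave identically, so I must instead make $B$ have a \emph{spiky} spectrum in a direction that is cheap for $S_p$ but expensive for $S_q$; but for $p<q$, spikiness is cheaper for $S_q$, not $S_p$. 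So I would flip the roles: make $B$ have a \emph{flat, high-rank, low-peak} spectrum (many small equal singular values) while $A'$ has a spiky one. Concretely, set $A' = c\, J^1$ (a single nonzero entry, rank one, $\nnz=1$, satisfying \ref{P1} trivially) and $B = b H$; then $\|A'\|_{S_p}=\|A'\|_{S_q}=c$, while $\|B\|_{S_p}=b\sqrt n\,n^{1/p}$ and $\|B\|_{S_q}=b\sqrt n\,n^{1/q}$. Property~\ref{P3} gives $c = b\sqrt n\, n^{1/q}$, and then $\|B\|_{S_p}/\|A'\|_{S_p} = n^{1/p-1/q}\le\epsilon$ exactly under the hypothesis $\epsilon>n^{-(1/p-1/q)}$, giving \ref{P2}. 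So the construction is $A'=n^{1/q-1/2}\,J^1$ and $B = n^{-1/2-1/q}H$ — essentially the same pair as in Theorem~\ref{thm:p>q>2} but now exploiting $p<q$ instead of $p>q$ (note $n^{1/p-1/q}$ versus $n^{1/p-1/q}$ with the sign of the exponent determined by which of $p,q$ is larger, and here $p<q$ means the exponent $1/p-1/q>0$, so we need $\epsilon\ge n^{1/p-1/q}$... which is $>1$ — this is wrong, so the Hadamard scaling must be chosen so the ratio is $n^{-(1/q-1/p)}$; I would recheck: with $B=bH$ one has $\|B\|_{S_r}=b\, n^{1/2+1/r}$, so $\|B\|_{S_p}/\|B\|_{S_q}=n^{1/p-1/q}$, and since here $1/p-1/q>0$ this ratio exceeds $1$, meaning $B$ is \emph{larger} in $S_p$ than $S_q$ — so Hadamard alone cannot work for $p<q$ and I must instead pad $B$ so that it is block-diagonal with a flat block and ensure the normalization goes the right way; the cleanest fix is to take $B$ itself spiky and $A'$ flat, i.e.\ swap back to $A'=aI$, $B=b\,J^1$... but then \ref{P4} fails since $J^1$ is trivially sparse).

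**The real obstacle, and how to resolve it.** The genuine difficulty is that for $p<q$ the "cheap for $S_p$, expensive for $S_q$" matrix must be \emph{spiky} (a single large singular value), and a rank-one matrix can be cheaply sparsified, so \ref{P4} cannot hold for the literal construction. The resolution, I expect, is to make $B$ a \emph{direct sum of $\Theta(n)$ rank-one blocks each of which is individually hard to sparsify} — e.g.\ $B = \bigoplus_i u_i v_i^\top$ where each $u_i, v_i$ is a flat $\pm1$ vector of length $\Theta(1)\cdot n^{\text{something}}$, analogous to the $H_{[m]}\otimes v$ construction but with the spectrum arranged so that $\|B\|_{S_p}$ is dominated by the rank (many small blocks) while each block is still an $\Omega(\text{width}^2)$-dense object in $S_q$. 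I would use the Pinching inequality (Lemma~\ref{lem:pinching_bhatia}) directly — not through $A^\top A$ — to lower bound $\|B'-B\|_{S_q}^q$ by $\sum_i\|(B'-B)$ restricted to block $i\|_{S_q}^q$, and then for each block invoke a rank-one analogue of Lemma~\ref{lem:rank1_spectral_LB}, observing that for a flat $\pm1$ rank-one $t\times t$ matrix $M$, zeroing out half its columns loses a constant fraction of $\|M\|_{S_q}$ (since $\|M\|_{S_q}=\|M\|_{S_\infty}=t$ and the restriction still has operator norm $t/\sqrt2$). The parameters (block width, number of blocks, scaling $b$) are then tuned exactly as in Section~\ref{sec:small_notto_large_p>2}: choose the block width so that $\|B\|_{S_p}/\|A'\|_{S_p}=n^{1/q-1/p}\cdot\text{polylog}$ type bound matches the hypothesis on $\epsilon$, set $a$ from \ref{P3}, and read off \ref{P1} from $\nnz(A')=O(n)$. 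I expect Property~\ref{P2} — getting the $S_p$ mass of $B$ small enough relative to $A'$ while keeping every block wide enough for \ref{P4} — to be the tight constraint that dictates the exact statement $\epsilon > n^{-(1/p-1/q)}$, and the bulk of the writing will be the bookkeeping of these exponents together with a Lemma~\ref{lem:pinching_block_diag}-style passage that works for $q<2$ via a direct (rather than $A^\top A$) pinching argument.
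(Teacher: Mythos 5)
There is a genuine gap, and it is located exactly where you hesitate. You write that for $p<q$ the matrix $B$ must be spiky (a single large singular value), and then conclude that "a rank-one matrix can be cheaply sparsified, so \ref{P4} cannot hold for the literal construction," at which point you set off on a multi-block detour. This is the error: being rank-one (spiky \emph{spectrum}) does not mean having few nonzero \emph{entries}. The all-ones matrix $J_n$ is rank one, yet for $q\le 2$ it \emph{cannot} be sparsified: since $J_n=\mathbf{1}\mathbf{1}^\top$ has $\|J_n\|_{S_q}=\|J_n\|_{S_2}=n$, and since $\|X\|_{S_q}\ge\|X\|_{S_2}$ for $q\le 2$, any $J'$ with $\nnz(J')\le (1-4\epsilon_0^2)n^2$ leaves at least $4\epsilon_0^2 n^2$ unit entries uncovered, forcing $\|J'-J_n\|_{S_q}\ge\|J'-J_n\|_F\ge 2\epsilon_0 n$. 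This simple Frobenius counting argument is the whole of \ref{P4} here; no pinching, no block decomposition, and no operator-norm restriction lemma are needed. (Indeed, your proposed column-restriction argument from Lemma~\ref{lem:rank1_spectral_LB} only gives a constant-factor loss when half the columns are zeroed, and does not by itself force $\Omega(n^2)$ nonzeros of a general approximant; the $S_2$ counting argument does.) When you considered $A'=aI$ and a spiky $B$, you reached for $B=bJ^1$ (a single entry) rather than $B=bJ_n$ (rank-one but dense), and this conflation is what sent you down the wrong path.

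Once this is fixed, the construction is exactly $A'=I$ and $B=n^{\frac1q-1}J_n$, which satisfies \ref{P1}--\ref{P4} with the exponent bookkeeping you already carried out: $\|B\|_{S_p}=\|B\|_{S_q}=n^{1/q}$ (rank one), $\|A'\|_{S_r}=n^{1/r}$, so \ref{P3} is immediate and \ref{P2} reads $n^{1/q}/n^{1/p}=n^{-(\frac1p-\frac1q)}<\epsilon$, matching the hypothesis exactly. Your multi-block plan with $k$ blocks of side $t=n/k$, if tuned, gives a ratio $(k/n)^{1/p-1/q}$ for \ref{P2} and a lower bound $n^2/k$ for \ref{P4}; both are optimized at $k=1$, so the block construction would collapse back to $J_n$ anyway. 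In short: your diagnosis of what kind of spectrum $B$ needs is correct, but you discard the right matrix by confusing low rank with sparsity, and the lower-bound mechanism you reach for is more elaborate than the $S_q\ge S_2$ comparison that actually does the job for $q\le 2$.
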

Again, our proof is based on constructing matrices $A',B$ satisfying \ref{P1}-\ref{P4}.
Let $J_n$ denote the $n\times n$ all-ones matrix.
We set $A'= I$ and $B = a J_n$, with scalar $a>0$ chosen specifically to satisfy \ref{P3}.

\begin{lemma}\label{lem:J_hard_for_p<2}
For all $0< q\leq 2$, every $(\epsilon_0,S_q)$-norm approximation of $J_n$ must have $\Omega(n^2)$ non-zero entries.
\end{lemma}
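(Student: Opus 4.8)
\textbf{Proof proposal for Lemma~\ref{lem:J_hard_for_p<2}.}
The plan is a short entry-counting argument combined with the monotonicity of Schatten norms in the exponent. First I would record the basic facts about $J_n=\mathbf{1}_n\mathbf{1}_n^\top$: it has rank $1$ with a single non-zero singular value equal to $n$ (since $J_nJ_n^\top=nJ_n$), so $\|J_n\|_{S_q}=n$ for every $q>0$. Consequently, any $(\epsilon_0,S_q)$-norm approximation $B'$ of $J_n$ satisfies $\|B'-J_n\|_{S_q}\le \epsilon_0\|J_n\|_{S_q}=\epsilon_0 n$.

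Next I would pass from $S_q$ to the Frobenius norm using the hypothesis $q\le 2$: for any matrix $E$, the vector of singular values has $\|\sigma(E)\|_q\ge\|\sigma(E)\|_2$ whenever $q\le 2$, i.e.\ $\|E\|_{S_q}\ge\|E\|_{S_2}=\|E\|_F$ (this direction of monotonicity is valid for all $0<q\le 2$, including $q<1$). Applying this to $E=B'-J_n$ gives $\|E\|_F\le\epsilon_0 n$, hence $\|E\|_F^2\le\epsilon_0^2 n^2$.

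Finally I would lower bound $\|E\|_F^2$ by counting: for every entry $(i,j)$ at which $B'$ vanishes we have $E_{ij}=(B'-J_n)_{ij}=-1$, which contributes $1$ to $\|E\|_F^2$. Therefore the number of zero entries of $B'$ is at most $\|E\|_F^2\le\epsilon_0^2 n^2$, and so $\nnz(B')\ge (1-\epsilon_0^2)n^2=\Omega(n^2)$ (with $\epsilon_0=0.1$ this is at least $0.99\,n^2$). I do not expect any genuine obstacle here — this is the simplest of the four cases in Theorem~\ref{thm:main_forall_pq_separation}; the only point requiring care is invoking the Schatten-norm inequality in the correct direction for $q\le 2$ (smaller exponent yields a larger norm), so that the dense $\pm1$-like residual $B'-J_n$ is forced to have large $S_q$-norm.
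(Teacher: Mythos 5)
Your proof is correct and takes essentially the same route as the paper: reduce from $S_q$ to Frobenius via the monotonicity $\|E\|_{S_q}\geq\|E\|_{S_2}$ for $q\leq 2$, then observe that a Frobenius approximation of $J_n$ must be dense. The paper states the latter step as ``clear''; you have simply filled in the (correct) entry-counting argument, noting each zero of $B'$ contributes $1$ to $\|B'-J_n\|_F^2$.
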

\begin{proof}
$J_n$ is rank-1, hence $\|J_n\|_{S_2}=\|J_n\|_{S_q}=n$.
It is clear that every Frobenius norm approximation $J'$ of $J_n$ must have at least $\Omega(n^2)$ non-zero entries.
Since $\|J'-J_n\|_{S_q}\geq \|J'-J_n\|_{S_2}$, the conclusion follows.
\end{proof}

\begin{proof}[Proof of Theorem~\ref{thm:p<q<2}]
Let $A=I + n^{\frac{1}{q}-1} J_n$
and set $A'\coloneqq I$ and $B\coloneqq n^{\frac{1}{q}-1} J_n$.
We now verify
\begin{enumerate}[label={\bfseries P\arabic*}]
    \item $\nnz(I) = n$.
    \item $\|n^{\frac{1}{q}-1} J_n\|_{S_p} = n^{\frac{1}{q}} = n^{\frac{1}{q} - \frac{1}{p}}\|I\|_{S_p} < \epsilon \|I\|_{S_p}$.
    \item $\|n^{\frac{1}{q}-1} J_n\|_{S_q} = n^{\frac{1}{q}} = \|I\|_{S_q}$.
    \item follows from Lemma~\ref{lem:J_hard_for_p<2}.
\end{enumerate}
By Lemma~\ref{lem:properties}, this concludes the proof of Theorem~\ref{thm:p<q<2}.
\end{proof}

\subsection{Case $1\leq q<\min(p,2)$}\label{sec:q<min_p_2}
\begin{theorem}\label{thm:q<min_p_2}
Fix $1\leq q<\min(p,2)$.
Then for all $n$ and $\epsilon > (\log n)^{-(\frac{1}{q}-\frac{1}{p})}$, 
there is a matrix $A\in\R^{n\times n}$ that has an $(\epsilon, S_p,1)$-norm sparsifier, but every $(\epsilon_0, S_q)$-norm sparsifier of $A$ must have $\Omega(n^2/\log n)$ non-zero entries,
for a fixed constant $\epsilon_0=0.1$.
\end{theorem}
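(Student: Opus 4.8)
The plan is to instantiate the same template used for the other three cases: exhibit matrices $A'$ and $B$ satisfying Properties~\ref{P1}--\ref{P4} and then invoke Lemma~\ref{lem:properties}. Since the target sparsifier should have a single non-zero entry, I would take $A' = c\,J^1$ for a scalar $c>0$ to be fixed later, and I would take $B$ to be block-diagonal with scaled all-ones blocks, so that its spectrum is flat (this is what makes $\|B\|_{S_p}$ small relative to $\|B\|_{S_q}$) while each block is hard to approximate in the Frobenius (hence $S_q$, for $q\le 2$) norm. Concretely, let $m$ be the unique power of $2$ with $\log n \le m < 2\log n$ (it divides $n$ since both are powers of $2$), and set $B = I_m \otimes \bigl(\tfrac{m}{n}J_{n/m}\bigr)$, i.e.\ $m$ diagonal blocks, each an $(n/m)\times(n/m)$ all-ones matrix scaled to have operator norm $1$. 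Then $B$ has exactly $m$ singular values, all equal to $1$, so $\|B\|_{S_r} = m^{1/r}$ for every $r>0$.

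Verifying \ref{P1}--\ref{P3} is then routine. Property~\ref{P1} is immediate since $\nnz(A')=1$. Taking $c = m^{1/q}$ gives $\|A'\|_{S_q} = m^{1/q} = \|B\|_{S_q}$, which is \ref{P3}. For \ref{P2}, $\|B\|_{S_p} = m^{1/p} = m^{1/p-1/q}\,\|A'\|_{S_p} \le (\log n)^{1/p-1/q}\,\|A'\|_{S_p} < \epsilon\,\|A'\|_{S_p}$, where the middle step uses $m\ge\log n$ and $1/p<1/q$, and the last step is exactly the hypothesis $\epsilon > (\log n)^{-(1/q-1/p)}$.

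The heart of the argument is \ref{P4}, which I would prove as a blockwise version of Lemma~\ref{lem:J_hard_for_p<2} via the Pinching inequality. Let $B'$ be any $(2\epsilon_0,S_q)$-norm approximation of $B$, let $I_1,\dots,I_m$ index the diagonal blocks, and set $s_j=\nnz((B')_{I_j}^{I_j})$. By Lemma~\ref{lem:pinching_bhatia}, $\|B'-B\|_{S_q}^q \ge \sum_{j=1}^m \|(B')_{I_j}^{I_j} - \tfrac{m}{n}J_{n/m}\|_{S_q}^q$. Since $q\le 2$ we have $\|\cdot\|_{S_q}\ge\|\cdot\|_{S_2}=\|\cdot\|_F$, and if $s_j\le\tfrac12(n/m)^2$ then $(B')_{I_j}^{I_j}$ disagrees with the all-ones block $\tfrac{m}{n}J_{n/m}$ on at least half of its $(n/m)^2$ entries, each contributing $(m/n)^2$ to the squared Frobenius norm; hence $\|(B')_{I_j}^{I_j} - \tfrac{m}{n}J_{n/m}\|_{S_q} \ge \tfrac{1}{\sqrt2} = \tfrac{1}{\sqrt2}\|\tfrac{m}{n}J_{n/m}\|_{S_q}$. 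Now suppose for contradiction that $\nnz(B') < \tfrac14\cdot n^2/m$; as the diagonal blocks are disjoint, $\sum_j s_j < \tfrac14 n^2/m$, so fewer than $\tfrac{m}{2}$ blocks can have $s_j>\tfrac12(n/m)^2$, leaving at least $\tfrac{m}{2}$ blocks contributing at least $2^{-q/2}$ each. Thus $\|B'-B\|_{S_q}^q \ge \tfrac{m}{2}\cdot 2^{-q/2} = 2^{-1-q/2}m = 2^{-1-q/2}\|B\|_{S_q}^q$, so $\|B'-B\|_{S_q} \ge 2^{-1/q-1/2}\|B\|_{S_q} \ge 2^{-3/2}\|B\|_{S_q} > 2\epsilon_0\|B\|_{S_q}$, a contradiction. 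Hence every $(2\epsilon_0,S_q)$-norm approximation of $B$ has $\Omega(n^2/m)=\Omega(n^2/\log n)=\tilde\Omega(n^2)$ non-zero entries, which is \ref{P4}; the theorem follows from Lemma~\ref{lem:properties} with $A=A'+B$.

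The step that needs the most care is the tension between \ref{P2} and \ref{P4}: enlarging $m$ flattens the spectrum of $B$ and helps \ref{P2}, but it directly weakens the \ref{P4} lower bound to $\Omega(n^2/m)$. The hypothesis $\epsilon>(\log n)^{-(1/q-1/p)}$ is precisely the threshold at which $m=\Theta(\log n)$ already suffices for \ref{P2}, which is why the sparsity bound here is $\Omega(n^2/\log n)$ rather than $\Omega(n^2)$. (For $q\ge 2$ one can instead afford $m=n$ with a Hadamard block, as in Theorem~\ref{thm:p>q>2} and Lemma~\ref{lem:B_hard_for_p>2}, because there the pinching-to-spectral-norm bound controls each block; for $q<2$ the only per-block hardness available is the Frobenius argument behind Lemma~\ref{lem:J_hard_for_p<2}, which forces the block count $m$, and hence the logarithmic loss, to appear.)
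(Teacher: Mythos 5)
Your proposal is correct and follows essentially the same route as the paper: both take $A'$ proportional to $J^1$ and $B$ a block-diagonal matrix with $\Theta(\log n)$ scaled all-ones blocks of size $\Theta(n/\log n)\times\Theta(n/\log n)$, verify \ref{P1}--\ref{P3} by direct computation, and prove \ref{P4} via the pinching inequality combined with a per-block Frobenius-norm lower bound (the content of Lemma~\ref{lem:J_hard_for_p<2}, which you reprove inline). The only cosmetic differences are a swap in the meaning of the block parameter ($m$ denotes block count for you, block size in the paper) and that you normalize $A'$ rather than $B$ to achieve \ref{P3}.
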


Again, our proof is based on constructing matrices $A',B$ satisfying \ref{P1}-\ref{P4}.
We set $A'= J^1$ and $B = a C$, with scalar $a>0$ chosen specifically to satisfy \ref{P3}, and $C$ a block-diagonal matrix described below.

\begin{proof}[Proof of Theorem~\ref{thm:q<min_p_2}.]
Let $C$ be an $n\times n$ block-diagonal matrix, whose blocks are the matrices $\tfrac{1}{m}J_m$ with parameter $m$ to be specified.
It has $n/m$ singular values, each one equals $\tfrac{1}{m}\|J_m\| = 1$, and $\nnz(C) = n m$.
For every matrix $C'$, it holds by 
the pinching inequality~\cite{DBLP:journals/tamm/Bhatia00}
(Lemma~\ref{lem:pinching_bhatia})
that $\|C'-C\|_{S_q} \geq \|\blockdiag(C'-C)\|_{S_q}$, where $\blockdiag(C'-C)$ denotes the block-diagonal matrix that is $0$ in all locations where $C$ is zero, and otherwise has the same value as $C'-C$
(i.e, it has the same structure as $C$).
Hence, in order for $C'$ to be a $(\tfrac{1}{4},S_q)$-norm approximation, $\Omega(n/m)$ of the blocks on its diagonal have to be at least $(\tfrac{1}{2},S_q)$-norm approximation of $\tfrac{1}{m}J_m$, thus by Lemma~\ref{lem:J_hard_for_p<2}, $\nnz(C')\geq \Omega (n m)$.

Set $m=\tfrac{n}{\log n}$,
let $A = J^1 + (\log n)^{-\frac{1}{q}} C$
and set $A'\coloneqq J^1$ and $B\coloneqq (\log n)^{-\frac{1}{q}} C$.
We now verify
\begin{enumerate}[label={\bfseries P\arabic*}]
    \item $\nnz(J^1) = 1$.
    \item $\|(\log n)^{-\frac{1}{q}} C\|_{S_p} = (\log n)^{\frac{1}{p}-\frac{1}{q}} < \epsilon \|J^1\|_{S_p}$.
    \item $\|(\log n)^{-\frac{1}{q}} C\|_{S_q} = 1 = \|J^1\|_{S_q}$.
    \item was verified in the preceding paragraph.
\end{enumerate}
By Lemma~\ref{lem:properties}, this concludes the proof of Theorem~\ref{thm:q<min_p_2}.
\end{proof}


\section{When $p<1$ or $q<1$}\label{sec:small_p_q}

When $0<p<1$, the Schatten $p$-norm does not satisfy the triangle inequality and is thus not a norm,
but it nevertheless seems natural to extend the results of Theorem~\ref{thm:main_forall_pq_separation} to $p<1$ or $q<1$.
Unfortunately, our proof for each case in Theorem~\ref{thm:main_forall_pq_separation} constructs matrices $A'$ and $B$ satisfying \ref{P1}-\ref{P4} and then employs the triangle inequality.
However, we can replace the triangle inequality with the next lemma, incurring an inflation $1+\epsilon\to (1+\epsilon^p)^{1/p}$ when using \ref{P2} and deflation $2\to 2^{1/q}$ when using \ref{P3}.
As mentioned in Section~\ref{subsec:intro_small_pq}, we treat $p= 0$ and $q=0$ separately.

\begin{lemma}\label{lem:triangle_small_p}
For all $A,B\in\R^{m\times n}$ and $0<p\leq 1$, we have $\|A+B\|_{S_p}^p\leq \|A\|_{S_p}^p + \|B\|_{S_p}^p$.
\end{lemma}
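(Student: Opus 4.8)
The plan is to reduce the matrix inequality to a known scalar inequality about singular values, using the standard fact that Schatten $p$-(quasi)norms are unitarily invariant symmetric gauge functions of the singular value vector. Concretely, I would invoke a majorization-type result: for any $A,B\in\R^{m\times n}$, the singular values satisfy $\sigma_i(A+B)\le \sigma_j(A)+\sigma_k(B)$ whenever $j+k=i+1$ (the Weyl-type singular value inequality), and more usefully for this purpose the Ky Fan / Lidskii-type statement that the vector of singular values of $A+B$ is weakly majorized by the sum of the (sorted) singular value vectors of $A$ and $B$: $\sum_{i\le t}\sigma_i(A+B)\le \sum_{i\le t}\bigl(\sigma_i(A)+\sigma_i(B)\bigr)$ for all $t$. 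Then the problem becomes purely about nonnegative vectors.

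First I would set $a=(\sigma_1(A),\sigma_2(A),\dots)$ and $b=(\sigma_1(B),\sigma_2(B),\dots)$, both sorted in nonincreasing order, and let $c$ be the sorted singular value vector of $A+B$. The goal $\|A+B\|_{S_p}^p\le\|A\|_{S_p}^p+\|B\|_{S_p}^p$ is exactly $\sum_i c_i^p\le\sum_i a_i^p+\sum_i b_i^p$. Since $x\mapsto x^p$ is concave and nondecreasing on $\R_+$ for $0<p\le 1$, the function $g(v)=\sum_i v_i^p$ is Schur-concave on the nonincreasing cone; combined with the weak majorization $c\prec_w a+b$, this gives $\sum_i c_i^p\ge\sum_i (a_i+b_i)^p$ — which is the \emph{wrong} direction. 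So majorization alone is not the right tool here, and I would instead argue entrywise.

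The cleaner route: the elementary scalar fact that $(s+t)^p\le s^p+t^p$ for all $s,t\ge 0$ and $0<p\le 1$ (subadditivity of $x\mapsto x^p$, provable by noting $s^p+t^p\ge s^p+t^p$ trivially, or by checking $f(t)=(s+t)^p-s^p-t^p$ has $f(0)=0$ and $f'(t)=p((s+t)^{p-1}-t^{p-1})\le 0$). I would combine this with the triangle inequality in the operator ($S_\infty$) norm together with the Fan dominance / pinching-type decomposition, but the slickest is the known result of Rotfel'd / McCarthy: for $0<p\le1$ the map $A\mapsto\|A\|_{S_p}^p$ is subadditive on positive semidefinite matrices, and this extends to general $A,B$ via $\|A\|_{S_p}^p=\||A|\|_{S_p}^p$ and the singular value inequality $\sigma_i(A+B)^p$ summing appropriately. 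Thus the main step I would actually write out is: by the Weyl-type inequality $\sigma_{2i-1}(A+B)\le\sigma_i(A)+\sigma_i(B)$ and $\sigma_{2i}(A+B)\le\sigma_{i}(A)+\sigma_{i+1}(B)\le\sigma_i(A)+\sigma_i(B)$ is too lossy; instead use that $\sum_i\sigma_i(A+B)^p$, by the Rotfel'd inequality, is at most $\sum_i\sigma_i(A)^p+\sigma_i(B)^p$, and cite this (it is classical, e.g. Bhatia's \emph{Matrix Analysis}, or the pinching inequality already invoked as Lemma~\ref{lem:pinching_bhatia} in the paper) together with scalar subadditivity $x\mapsto x^p$.

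I expect the main obstacle to be that the naive reduction via majorization points the wrong way, so one genuinely needs the Rotfel'd/McCarthy subadditivity of $A\mapsto\|A\|_{S_p}^p$ on Hermitian (or general) matrices for $0<p\le1$, rather than a one-line argument from Weyl's inequalities. Once that classical result is quoted, the proof is immediate: $\|A+B\|_{S_p}^p=\sum_i\sigma_i(A+B)^p\le\sum_i\bigl(\sigma_i(A)^p+\sigma_i(B)^p\bigr)=\|A\|_{S_p}^p+\|B\|_{S_p}^p$, where the inequality is exactly the cited subadditivity (which itself rests on the scalar fact $(s+t)^p\le s^p+t^p$ applied singular-value-wise after a suitable unitary diagonalization / pinching argument). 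I would present it in that order: state the scalar subadditivity, state the singular-value version (citing it), and conclude.
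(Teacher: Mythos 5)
Your final argument is the same as the paper's: it proves the lemma by invoking Rotfel'd's theorem (see also Thompson, and Bhatia, Theorem~IV.2.14) for concave increasing $f$ with $f(0)=0$, applied to $f(x)=x^p$; you cite the same result. Some of the intermediate reasoning in your sketch is incorrect, though. The weak majorization of $\sigma(A+B)$ by $\sigma(A)+\sigma(B)$, together with concavity of $t\mapsto t^p$, does \emph{not} yield $\sum_i\sigma_i(A+B)^p\ge\sum_i\bigl(\sigma_i(A)+\sigma_i(B)\bigr)^p$: weak majorization is preserved by increasing \emph{convex} functions, and for concave ones there is no inequality in either direction (e.g.\ $A=\mathrm{diag}(2,0)$, $B=\mathrm{diag}(-1,0)$ gives $\sigma(A+B)=(1,0)$ and $\sigma(A)+\sigma(B)=(3,0)$, so the sum of $p$-th powers is strictly smaller on the left). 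You are right to drop that route, but the reason you give is not a valid implication. Likewise, the pinching inequality in Lemma~\ref{lem:pinching_bhatia} is stated for $p\ge1$ and points the wrong way, so it is not a substitute for Rotfel'd; and scalar subadditivity $(s+t)^p\le s^p+t^p$ ``applied singular-value-wise'' is not a proof, since $\sigma_i(A+B)$ is in general not $\sigma_i(A)+\sigma_i(B)$ --- bridging exactly that gap is the nontrivial content of Rotfel'd's theorem. None of this undermines your conclusion, because the cited classical result carries the full weight of the argument, just as it does in the paper.
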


This lemma is a special case of~\cite[Theorem 1]{rotfeld1967remarks} (see also \cite{thompson1976convex} and~\cite[Theorem IV.2.14]{bhatia1997matrix}), instantiated for the function $f(x)=x^p$ for $p<1$. 
The general statement applies to concave increasing functions $f:\R_+\to\R_+$ with $f(0)=0$~\cite[Theorem 1]{rotfeld1967remarks}.

We now use this lemma
to prove the following claim:
For all $0<p,q<1$ and $0<\epsilon < (2^p-1)^{1/p}$,
if $A= A'+B$ satisfies \ref{P1}-\ref{P4},
then $A'$ is a $(2\epsilon, S_p, O(n))$-norm sparsifier of $A$
and every $(2^{-1/q} \epsilon_0, S_q)$-norm approximation of $A$
must have $\tilde{\Omega}(n^2)$ non-zero entries. 
Let us now verify this claim. 
By~\ref{P2} and Lemma~\ref{lem:triangle_small_p}, 
$\|A-A'\|_{S_p} = \|B\|_{S_p} < \epsilon (1+\epsilon^p)^{1/p}\|A\|_{S_p}\leq 2\epsilon \|A\|_{S_p}$,
and thus by \ref{P1},
$A'$ is a $(2\epsilon, S_p, O(n))$-norm sparsifier of $A$.
By \ref{P3} and Lemma~\ref{lem:triangle_small_p}, $\|A\|_{S_q}\leq 2^{1/q}\|B\|_{S_q}$.
Hence, any matrix $\tilde{A}$ that is a $(2^{-1/q} \epsilon_0, S_q)$-norm approximation of $A$, satisfies $\|A - \tilde{A}\|_{S_q} \leq 2^{-1/q} \epsilon_0 \|A\|_{S_q}\leq \epsilon_0 \|B\|_{S_q}$, meaning
that $\tilde{A}-A'$ is an $(\epsilon_0, S_q)$-norm approximation of $B$.
By \ref{P1} and \ref{P4} it follows that $\nnz(\tilde{A}) \geq \tilde{\Omega}(n^2)$.

Similarly to Section~\ref{sec:Schatten}, we treat $p<q$ and $p>q$ separately, and in the case $p<q$ we treat subcases $q>2$ and $q<2$ separately.

\subsection{Case $0 \leq p<1$ and $q\geq 2$}
We use the same construction as in Section~\ref{sec:small_notto_large_p>2}.
For $p>0$, the same proof of \ref{P1}-\ref{P4} works, as most of the effort in Section~\ref{sec:small_notto_large_p>2} was in proving that the matrix $B$ satisfies \ref{P4}, which does not depend on $p$ and hence still holds.
For $p=0$, 
note that $A$ is of full rank ($\|A\|_{S_0}=n$), and $B$ is of rank $m = \tfrac{n}{\log^2 n}$.
Thus $A'$ is a $(\log^{-2} n, S_0, n)$-norm sparsifier of $A$.
We have thus proved the following.

\begin{corollary}
Fix $q\geq 2$ and $0\leq p<1$. Then for all $n$ and $\tfrac{1}{\log n} < \epsilon< (2^p-1)^{1/p}$, 
there is a matrix $A\in\R^{n\times n}$ that has an $(\epsilon, S_p, n)$-norm sparsifier, but every $(\epsilon_0, S_q)$-norm approximation of $A$ must have $\tilde{\Omega}(n^2)$ non-zero entries.
\end{corollary}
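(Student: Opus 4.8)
The plan is to reuse verbatim the construction from Section~\ref{sec:small_notto_large_p>2}: take $A' = \sqrt{n}(\log n)^{1-2/q} I$ and $B = H_{[m]}\otimes v$ with $m = n/\log^2 n$, so that $A = A' + B$. The heart of the matter is that Properties~\ref{P1}, \ref{P3}, \ref{P4} either are literally independent of $p$ or were already established for all $q\ge 2$ in Lemmas~\ref{lem:rank1_spectral_LB}--\ref{lem:B_hard_for_p>2}. Concretely, \ref{P1} is the trivial $\nnz(A')=n$; \ref{P3} is the singular-value identity $\|B\|_{S_q}=\|A'\|_{S_q}=n^{1/q}$ (after scaling), which only involves $q$; and \ref{P4} is exactly Lemma~\ref{lem:B_hard_for_p>2}, whose proof never mentioned $p$. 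So the only case-specific work is \ref{P2}, i.e.\ controlling $\|B\|_{S_p}$ relative to $\|A'\|_{S_p}$ in the two regimes $0<p<1$ and $p=0$.

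For $0<p<1$, I would compute $\|B\|_{S_p} = m^{1/p} n^\alpha = n^{1/p+1/2}(\log n)^{1-2/p}$ exactly as in the proof of Theorem~\ref{thm:q>p>2} (the singular-value count is unchanged), and hence, after the scaling that makes \ref{P3} hold, $\|\tfrac{1}{\sqrt n(\log n)^{1-2/q}}B\|_{S_p} = n^{1/p}(\log n)^{-2(1/p-1/q)}$. Since $\|A'\|_{S_p} = n^{1/p}$ in the rescaled normalization, this is exactly $(\log n)^{-2(1/p-1/q)}\|A'\|_{S_p}$, which is $<\epsilon\|A'\|_{S_p}$ precisely under the hypothesis $\epsilon > (\log n)^{-2(1/p-1/q)}$; since $1/p-1/q \ge 1/p - 1/2 > 0$ here, the stated bound $\epsilon > 1/\log n$ is a (weaker) sufficient condition. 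Then invoke the modified-triangle-inequality claim established above (using Lemma~\ref{lem:triangle_small_p}) in place of Lemma~\ref{lem:properties}: this gives that $A'$ is a $(2\epsilon, S_p, n)$-norm sparsifier of $A$ and every $(2^{-1/q}\epsilon_0, S_q)$-norm approximation needs $\tilde\Omega(n^2)$ nonzeros. Absorbing the constants $2$ and $2^{-1/q}$ into the fixed $\epsilon_0 = 0.1$ yields the corollary as stated, which is why the hypothesis is $\epsilon < (2^p-1)^{1/p}$ (the regime in which the claim above applies).

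For $p=0$ the Schatten "$0$-norm'' is just the rank, so instead of a norm inequality I would argue directly: $A = A' + B$ with $A'$ a scaled identity of rank $n$, so $\|A\|_{S_0} = \rank(A) = n$, while $\rank(B) = m = n/\log^2 n$, so $\|A-A'\|_{S_0} = \rank(B) = \log^{-2}n \cdot \|A\|_{S_0}$; hence $A'$ is a $(\log^{-2}n, S_0, n)$-norm sparsifier — and a fortiori an $(\epsilon, S_0, n)$-sparsifier for any $\epsilon \ge \log^{-2} n$, which the hypothesis $\epsilon > 1/\log n$ certainly guarantees. The hardness side (\ref{P4}) is unchanged. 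I expect no genuine obstacle here: the whole point is that the hard instance was designed so that \ref{P4} is $p$-oblivious, and the only thing to watch is bookkeeping — making sure the replacement of the triangle inequality by Lemma~\ref{lem:triangle_small_p} (with its $1+\epsilon \to (1+\epsilon^p)^{1/p}$ inflation and $2 \to 2^{1/q}$ deflation) is applied cleanly, and that the $p=0$ case is handled by the rank argument rather than by any limiting procedure, since $\|\cdot\|_{S_0}$ is not $\lim_{p\to 0}\|\cdot\|_{S_p}$.
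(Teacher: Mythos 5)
Your proposal is correct and follows essentially the same route as the paper: reuse the construction of Section~\ref{sec:small_notto_large_p>2}, observe that \ref{P1}, \ref{P3}, \ref{P4} are $p$-independent (in particular Lemma~\ref{lem:B_hard_for_p>2} never uses $p$), verify \ref{P2} for $0<p<1$ by the same singular-value count, handle $p=0$ directly by a rank comparison $\rank(B)=n/\log^2 n$ versus $\rank(A)$, and replace the triangle inequality on the $S_p$ side with Lemma~\ref{lem:triangle_small_p}. One minor bookkeeping slip: you invoke the $2\to 2^{1/q}$ deflation and say you will ``absorb'' $2^{-1/q}\epsilon_0$ into $\epsilon_0$, but $2^{-1/q}\epsilon_0<\epsilon_0$ so that absorption runs the wrong way; fortunately, since $q\geq 2\geq 1$ here, the ordinary triangle inequality applies on the $S_q$ side exactly as in Lemma~\ref{lem:properties} (the paper's Section~\ref{sec:small_p_q} claim is stated only for $0<q<1$), so no deflation is needed and the hardness holds directly for $(\epsilon_0,S_q)$-approximations.
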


\subsection{Case $0\leq p<q<2$}
We use the same construction as in Section~\ref{sec:p<q<2}.
For $p>0$, the same proof of \ref{P1}-\ref{P3} works immediately, and \ref{P4} follows since Lemma~\ref{lem:J_hard_for_p<2} applies for $0<q<1$.
For $p=0$, it is easy to see that $A'$ is a $(\tfrac{1}{n},S_0,n)$-norm sparsifier of $A$ since $J_n$ is of rank $1$ and $A$ is of full rank, resulting with even better bounds.

\begin{corollary}
Fix $0 \leq p <1$ and $p< q\leq 2$. Then for all $n$ and $n^{-(\frac{1}{p} - \frac{1}{q})} <\epsilon< (2^p-1)^{1/p}$, there is a matrix $A\in\R^{n\times n}$ that has an $(\epsilon, S_p,n)$-norm sparsifier, but every $(\epsilon_0(q), S_q)$-norm approximation of $A$ must have $\Omega(n^2)$ non-zero entries,
where $\epsilon_0(q) = \epsilon_0$ if $q\geq 1$ and $2^{-1/q}\epsilon_0$ if $q<1$.
\end{corollary}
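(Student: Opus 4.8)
The plan is to reuse the construction from Section~\ref{sec:p<q<2} verbatim, namely $A' = J^1$ (a single nonzero entry equal to $1$ at position $(1,1)$) and $B = a J_n$ with $a = n^{1/q - 1}$, so that $A = J^1 + n^{1/q-1} J_n$. The point of this corollary is that once the construction is fixed, Properties \ref{P1}--\ref{P4} are essentially insensitive to whether $p$ and $q$ lie below or above $1$, so the only real work is to (i) check that the norm computations in \ref{P2} and \ref{P3} still go through for $p,q < 1$, and (ii) invoke the modified, non-triangle-inequality version of Lemma~\ref{lem:properties} established at the start of Section~\ref{sec:small_p_q}, which replaces the factor $2$ by $2^{1/q}$ and tolerates the $(1+\epsilon^p)^{1/p}$ inflation.

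Concretely, I would first handle the case $0 < p < 1$. Here $J_n$ has a single nonzero singular value equal to $n$, so $\|B\|_{S_r} = a\cdot n = n^{1/q}$ for every $r$ (in particular for $r = p$ and $r = q$), exactly as in the $p\ge 1$ proof; likewise $\|J^1\|_{S_r} = 1$ for every $r$. Thus \ref{P3} reads $\|B\|_{S_q} = n^{1/q} = \|A'\|_{S_q}$, and \ref{P2} reads $\|B\|_{S_p} = n^{1/q} = n^{1/q - 1/p}\|A'\|_{S_p} < \epsilon \|A'\|_{S_p}$ precisely when $\epsilon > n^{-(1/p - 1/q)}$, which is the stated lower bound on $\epsilon$. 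Property \ref{P1} is immediate ($\nnz(J^1) = 1 = O(n)$), and \ref{P4} is Lemma~\ref{lem:J_hard_for_p<2}, whose statement already covers all $0 < q \le 2$. I then feed these into the claim proved at the top of Section~\ref{sec:small_p_q}: for $0 < \epsilon < (2^p - 1)^{1/p}$ one gets that $A'$ is a $(2\epsilon, S_p, O(n))$-norm sparsifier of $A$, and that every $(2^{-1/q}\epsilon_0, S_q)$-norm approximation of $A$ needs $\tilde\Omega(n^2)$ nonzeros when $q < 1$, or every $(\epsilon_0, S_q)$-norm approximation when $q \ge 1$ (using the original Lemma~\ref{lem:properties} in that regime). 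This is exactly the split encoded by $\epsilon_0(q)$ in the statement, and since $\nnz$ here is $\Omega(n^2)$ with no logarithmic loss (Lemma~\ref{lem:J_hard_for_p<2} gives a clean $\Omega(n^2)$), the $\tilde\Omega$ tightens to $\Omega$.

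For the remaining case $p = 0$, the Schatten $0$-norm is the rank, so $\|A\|_{S_0} = \rank(A) = n$ (the identity-like diagonal of $J^1$ plus a rank-one perturbation is still full rank, and even if not one can perturb trivially), while $B = n^{1/q-1}J_n$ has rank $1$; hence $A' = J^1$, which differs from $A$ by the rank-$1$ matrix $B$, is a $(1/n, S_0, n)$-norm sparsifier of $A$ — in fact with a far better accuracy than demanded. The lower-bound side for $S_q$ is untouched since it never referred to $p$. I would remark, as the excerpt does, that this actually yields stronger parameters than the $p > 0$ subcase. The only mild subtlety — and the step I expect to need the most care — is bookkeeping the constant $\epsilon_0(q)$: one must make sure to apply the $2^{-1/q}$-deflated version of the reduction exactly when $q < 1$ and the plain version when $1 \le q \le 2$, and to confirm that the resulting accuracy for the $S_q$-approximation lower bound, namely $2^{-1/q}\epsilon_0$ or $\epsilon_0$, is what the theorem claims. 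There is no genuine obstacle here; the content is entirely in the earlier lemmas, and the corollary is a direct assembly.
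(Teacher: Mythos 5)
Your overall approach is exactly the paper's: reuse the construction from Section~\ref{sec:p<q<2}, observe that \ref{P1}--\ref{P4} transfer verbatim for $0<p<1$, handle $p=0$ by a direct rank count, and feed the properties into the modified reduction from the start of Section~\ref{sec:small_p_q} (or the original Lemma~\ref{lem:properties} when $1\le q\le 2$), yielding the $\epsilon_0(q)$ split.

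However, you misstate the construction, and as written it would not work. Section~\ref{sec:p<q<2} sets $A' = I$ (the identity) and $B = n^{1/q-1}J_n$, not $A' = J^1$. The matrix $J^1$ is used in Sections~\ref{sec:p>q>2} and \ref{sec:q<min_p_2}, not here. With $A' = J^1$ you would have $\|A'\|_{S_p} = 1$, so \ref{P2} becomes $n^{1/q} < \epsilon$, which is false; and for $p=0$, $A = J^1 + n^{1/q-1}J_n$ has rank $2$, so the rank-$1$ error $B$ is not a small relative perturbation. Your own subsequent calculations (e.g.\ the identity $n^{1/q} = n^{1/q-1/p}\|A'\|_{S_p}$, the phrase ``identity-like diagonal'' in the $p=0$ case, and the corollary's stated sparsity $s = n$) are all consistent with $A' = I$, so this reads as a transcription slip rather than a conceptual one, but as stated a reader could not verify \ref{P2}. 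Correct $A' = J^1$ to $A' = I$ throughout (and accordingly $\nnz(A') = n$, not $1$), and the argument matches the paper's proof.
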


\subsection{Case $0\leq q<\min(1,p)$}
One could hope to use the same construction as in Section~\ref{sec:q<min_p_2}, but the proof of \ref{P4} no longer works, since it relies on the pinching inequality of~\cite{DBLP:journals/tamm/Bhatia00}, which in turn relies on the fact that the Schatten $p$-norm is indeed a norm. 
We do not have a proof for all $q$ in this range.
For $q=0$, there is a known
non-explicit hard instance, as follows.

\begin{lemma}[Proposition 3.3 of~\cite{PR94}]
There are constants $\epsilon_1,\epsilon_2>0$ and a matrix $B\in\{0,1\}^{n\times n}$, such that in order to reduce its rank to $\epsilon_1 n$, one must change at least $\epsilon_2 n^2$ entries.
\end{lemma}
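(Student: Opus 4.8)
The plan is to prove this non‑constructively, by a counting/dimension argument of the kind pioneered by Valiant for matrix rigidity: we show that the number of matrices $B\in\{0,1\}^{n\times n}$ whose rank \emph{can} be brought down to $r:=\epsilon_1 n$ by changing at most $s:=\epsilon_2 n^2$ entries is strictly less than $2^{n^2}$, the total number of $\{0,1\}$‑matrices; hence some (in fact almost every) such $B$ requires more than $\epsilon_2 n^2$ changes, which is the assertion. To build intuition, note that over a finite field $\mathbb F$ this is immediate: there are at most $|\mathbb F|^{2rn}$ matrices of rank $\le r$ (write each as $UV$ with $U\in\mathbb F^{n\times r}$, $V\in\mathbb F^{r\times n}$), and at most $\binom{n^2}{\le s}(|\mathbb F|-1)^s\le 2^{H(\epsilon_2)n^2}|\mathbb F|^s$ ways to perturb a matrix in at most $s$ coordinates ($H$ the binary entropy), so the count is below $|\mathbb F|^{n^2}$ whenever $2\epsilon_1+\epsilon_2+\log_{|\mathbb F|}(2)\cdot H(\epsilon_2)<1$, which holds for $\epsilon_1,\epsilon_2$ sufficiently small absolute constants.

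Over $\mathbb R$ — the relevant setting, since the Schatten $0$‑norm is the real rank — the same skeleton works but the enumeration must be replaced by a geometric estimate. I would describe the set of real $n\times n$ matrices whose rigidity for target rank $r$ is at most $s$ as
\[
V_{r,s}\;=\;\bigcup_{T\subseteq[n]^2,\ |T|\le s}\ \pi_{\bar T}^{-1}\big(\pi_{\bar T}(\mathcal M_r)\big),
\]
where $\mathcal M_r$ is the determinantal variety of rank‑$\le r$ matrices, $\pi_{\bar T}$ is the coordinate projection forgetting the entries in $T$, and $\pi_{\bar T}^{-1}$ reinstates them as free coordinates. Each piece is a constructible set of dimension at most $\dim\mathcal M_r+|T|\le 2rn+s$ and degree at most $\deg(\mathcal M_r)\le 2^{o(n^2)}$ (for $r\le n$); taking the union over the $\binom{n^2}{\le s}\le 2^{H(\epsilon_2)n^2}$ choices of $T$, the whole of $V_{r,s}$ has dimension at most $2rn+s$ and degree at most $2^{H(\epsilon_2)n^2+o(n^2)}$. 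Invoking the bound that a real variety in $\mathbb R^N$ of dimension $D$ and degree $\delta$ contains at most $\delta\cdot 2^{O(D)}$ points of $\{0,1\}^N$ — a quantitative strengthening of the elementary fact that a $d$‑dimensional linear subspace contains at most $2^d$ such points — we obtain
\[
\big|V_{r,s}\cap\{0,1\}^{n^2}\big|\ \le\ 2^{H(\epsilon_2)n^2+o(n^2)}\cdot 2^{O(2rn+s)}\ =\ 2^{\,(H(\epsilon_2)+O(\epsilon_1)+O(\epsilon_2)+o(1))\,n^2}.
\]
Choosing $\epsilon_1,\epsilon_2>0$ small enough that the exponent is below $1$, this count is $<2^{n^2}$, so there exists $B\in\{0,1\}^{n\times n}$ outside $V_{r,s}$, i.e.\ one whose rank cannot be reduced to $\epsilon_1 n$ by changing fewer than $\epsilon_2 n^2$ entries.

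The main obstacle is precisely this passage from the finite field to $\mathbb R$: over $\mathbb R$ one cannot ``enumerate'' the rank‑$\le r$ matrices, and the key quantitative input becomes the bound on the number of $\{0,1\}$‑points in a low‑dimensional, bounded‑degree real variety, with a dependence of the form $2^{O(\mathrm{dim})}$ rather than, say, $2^{O(\mathrm{dim}\cdot\log n)}$ (the latter would be useless here because $\dim V_{r,s}=\Theta(n^2)$). Making this clean — controlling $\deg(\mathcal M_r)$, handling the positive‑dimensional fibers of the generic coordinate projection one uses to prove the lattice‑point bound, and then verifying that the several small error terms in the exponent are jointly dominated for a single consistent choice of the constants $\epsilon_1,\epsilon_2$ — is where all the care is needed; the rest is bookkeeping.
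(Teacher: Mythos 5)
The paper does not actually prove this statement; it is imported verbatim as Proposition~3.3 of \cite{PR94}, so there is no in-paper proof against which to compare. Reviewing your proposal on its own merits, then: the high-level strategy (a non-constructive counting/dimension argument showing that the $\{0,1\}$-matrices of low rigidity form a small set) is the right shape for such an existence result, and your finite-field warm-up is correct and standard. However, for the passage to $\mathbb R$ --- which is the case actually needed here, since the Schatten $0$-norm is the real rank --- your argument rests on an unreferenced and unproved black box: the claim that a real constructible set of dimension $D$ and degree $\delta$ contains at most $\delta\cdot 2^{O(D)}$ points of $\{0,1\}^N$. This is precisely where the entire difficulty of the real case lives, and I do not think it can be waved through as ``a quantitative strengthening of the elementary fact'' about linear subspaces. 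Coordinate projections of $V$ onto $D$ coordinates generally have positive-dimensional fibers (a generic linear projection would have finite fibers but would be injective on $\{0,1\}^N$ and hence useless), and it is not clear that the loss in passing to a good projection is absorbed by a uniform constant in the exponent; you flag this yourself, but flagging it is not resolving it. A second, smaller imprecision: $\deg(\mathcal M_r)$ for $r=\epsilon_1 n$ is not $2^{o(n^2)}$ but rather $2^{\Theta(n^2)}$ (with a constant that tends to $0$ as $\epsilon_1\to 0$, e.g.\ via Giambelli's formula; the naive Bézout bound even gives $2^{\Theta(n^2\log n)}$). The argument can survive this --- the point is that the constant vanishes as $\epsilon_1\to 0$, so it can be absorbed when choosing $\epsilon_1,\epsilon_2$ small --- but as written the claim is false and the bookkeeping needs to track the $\epsilon_1$-dependence of the exponent. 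In short: plausible and in the right spirit, but the lattice-point-in-variety lemma is a genuine gap, not a routine citation.
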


For all $p>0$, let $A' = \tfrac{n}{\epsilon} I$.
Since all the singular values of $B$ are bounded by $n$, then
all the singular values of $A = A' + B$ are in $(\tfrac{1}{\epsilon}\pm1)n$.
Thus, $A'$ is a $(2\epsilon,S_p,n)$-sparsifier of $A$,
and by \cite{PR94}, every $(\epsilon_1,S_0)$-approximation of $A$ must have $\Omega(n^2)$ non-zero entries.

\begin{corollary}
Let $p>0$. Then for all $n$ and $\epsilon>1/\poly(n)$,
there is a matrix $A\in\R^{n\times n}$ that has a $(2\epsilon, S_p,n)$-norm sparsifier, but every $(\epsilon_1, S_0)$-norm approximation of $A$ must have $\Omega(n^2)$ non-zero entries.
\end{corollary}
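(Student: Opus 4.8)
The plan is to reduce the claim for arbitrary $p>0$ to the known rank-rigidity lower bound of~\cite{PR94}, exactly along the lines already sketched in the excerpt's final subsection, but spelled out carefully. Fix the matrix $B\in\{0,1\}^{n\times n}$ guaranteed by Proposition~3.3 of~\cite{PR94}: reducing $\operatorname{rank}(B)$ below $\epsilon_1 n$ requires changing at least $\epsilon_2 n^2$ entries. The decomposition I would use is $A = A' + B$ with $A' = \tfrac{n}{\epsilon} I$, so that $B$ plays the role of the ``error'' matrix. Since $B$ has $0/1$ entries, all its singular values satisfy $\sigma_i(B)\le \|B\|_F \le n$, hence every eigenvalue of the symmetric perturbation (or more simply, every singular value of $A$ via Weyl's inequality applied to $A'$ and the perturbation $B$) lies in the interval $[(\tfrac1\epsilon - 1)n,\ (\tfrac1\epsilon+1)n]$.

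First I would verify the sparsifier side. We have $\operatorname{nnz}(A') = n$, and $\|A - A'\|_{S_p} = \|B\|_{S_p}$. For $p\ge 1$, $\|B\|_{S_p}\le \|B\|_{S_1}\le n\cdot n = n^2$ is too crude; instead I would bound $\|B\|_{S_p}\le n^{1/p}\cdot\sigma_1(B)\le n^{1/p}\cdot n$ for $p\ge 1$, while $\|A'\|_{S_p} = \tfrac{n}{\epsilon}\cdot n^{1/p}$, so the ratio is at most $\epsilon$, giving a clean $(\epsilon, S_p)$-approximation; combined with the triangle inequality $\|A'\|_{S_p}\le \|A\|_{S_p}+\|B\|_{S_p}$ one gets an $(2\epsilon, S_p)$-approximation of $A$ itself, as the excerpt states. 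For $0<p<1$ the same estimate works after replacing the triangle inequality with Lemma~\ref{lem:triangle_small_p}, which inflates $\epsilon$ only by a constant factor and is absorbed into the $2\epsilon$. Either way $A'$ has $n$ nonzeros, establishing the sparsifier claim.

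Next I would handle the hardness side for $S_0$. Suppose $\tilde A$ is an $(\epsilon_1, S_0)$-norm approximation of $A$, i.e.\ $\operatorname{rank}(A - \tilde A)\le \epsilon_1\operatorname{rank}(A) = \epsilon_1 n$ (since $A$ is full rank, as $\tfrac1\epsilon - 1 > 0$ forces all its singular values to be positive). Then $\tilde A - A' = \tilde A - \tfrac n\epsilon I$ differs from $B$ exactly by $A - \tilde A$, so $B - (\tilde A - A')$ has rank at most $\epsilon_1 n$; equivalently $\tilde A - A'$ is a matrix of the form $B + (\text{rank } \le \epsilon_1 n)$. Hmm --- this is not literally ``changing $\le\epsilon_2 n^2$ entries of $B$.'' The correct route is: by~\cite{PR94}, any matrix of rank at most $\epsilon_1 n$ disagrees with $B$ in at least $\epsilon_2 n^2$ positions. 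Here $\tilde A$ being low-rank-close to $A$ means $A - (\tilde A) $ has rank $\le \epsilon_1 n$; but $A = B + \tfrac n\epsilon I$ and $\tfrac n\epsilon I$ has rank $n$, so I should instead argue: if $\operatorname{nnz}(\tilde A) < \epsilon_2 n^2$, then on the $\ge n^2 - \epsilon_2 n^2$ entries outside the support of $\tilde A$ we have $A - \tilde A = A$, so $A - \tilde A$ agrees with $A$ there, hence with $B$ off-diagonal; the rank of $A-\tilde A$ restricted this way is still forced to be large. Cleanest: $\tilde A - A'$ is a matrix that agrees with $B$ on all but $\operatorname{nnz}(\tilde A) + n$ entries (the $+n$ for the diagonal of $A'$), and has rank $\le \epsilon_1 n + (\text{rank of }A' \text{?})$ --- so the argument needs $\tilde A - A'$ itself to have low rank, which follows because $\operatorname{rank}(\tilde A - A') \le \operatorname{rank}(\tilde A - A) + \operatorname{rank}(A - A') = \operatorname{rank}(\tilde A - A) + \operatorname{rank}(B) $, and that's not small.

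The main obstacle, then, is getting the bookkeeping of the reduction exactly right: I must choose the decomposition so that a sparse $(\epsilon_1,S_0)$-approximation of $A$ yields, after subtracting the fixed full-rank shift $A'$, a matrix that is simultaneously \emph{low rank} and \emph{sparsely different from $B$}, contradicting~\cite{PR94}. The fix is to observe that $A - \tilde A$ has rank $\le\epsilon_1 n$ and that $A - \tilde A = B + \tfrac n\epsilon I - \tilde A$, so $\tilde A - \tfrac n\epsilon I = B - (A-\tilde A)$ has rank at most $\operatorname{rank}(B) + \epsilon_1 n$ --- still not low. The genuinely correct statement is that we do \emph{not} subtract $A'$ from $\tilde A$; rather, directly: $A - \tilde A$ has rank $\le \epsilon_1 n$, and it equals $B - \tilde A + \tfrac n\epsilon I$; a matrix of rank $\le \epsilon_1 n$ that agrees with $\tfrac n\epsilon I$ on the off-diagonal wherever $\tilde A$ vanishes there must, on the $n^2 - n - \operatorname{nnz}(\tilde A)$ off-diagonal zero positions of $\tilde A$, equal $B$'s off-diagonal entries plus $0$ --- so if $\operatorname{nnz}(\tilde A) < \epsilon_2 n^2 - n$, then $A - \tilde A + (\text{diagonal correction})$ is a rank-$O(\epsilon_1 n)$ matrix agreeing with $B$ off-diagonal in $> (1-\epsilon_2)n^2 - 2n$ positions, which for small enough $\epsilon_1$ contradicts Proposition~3.3. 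I would make $\epsilon_1$ possibly slightly smaller than in~\cite{PR94} and absorb the diagonal ($n \ll \epsilon_2 n^2$) terms, concluding $\operatorname{nnz}(\tilde A) = \Omega(n^2)$. The lower bound on $\epsilon$ in the corollary, $\epsilon > 1/\operatorname{poly}(n)$, is only needed so that $\tfrac1\epsilon - 1 > 0$ and $A$ is full rank; any $\epsilon < 1/2$, say, suffices, and the stated range is a safe weakening. This completes the plan; the only real care is the rank/sparsity bookkeeping in the last paragraph, everything else is a one-line singular-value estimate.
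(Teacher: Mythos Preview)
Your approach matches the paper's construction exactly: $A = \tfrac{n}{\epsilon} I + B$ with $B$ the rigid matrix from~\cite{PR94}. The sparsifier side is fine, though the paper avoids your $p\ge 1$ versus $p<1$ case split by arguing directly from Weyl's inequality: since every $\sigma_i(A) \ge (\tfrac1\epsilon-1)n$ and every $\sigma_i(B)\le n$, the ratio $\|B\|_{S_p}/\|A\|_{S_p}$ is at most $\tfrac{\epsilon}{1-\epsilon}\le 2\epsilon$ uniformly in $p>0$, with no triangle inequality (or Lemma~\ref{lem:triangle_small_p}) needed.

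On the hardness side your winding path lands in the right place but with an unnecessary and potentially broken detour. You do \emph{not} want a diagonal correction: modifying the diagonal of $A-\tilde A$ can raise its rank by as much as $n$, destroying the ``rank $\le \epsilon_1 n$'' hypothesis you need to invoke~\cite{PR94}. The clean one-line argument is already present in your text before you complicate it: set $L \coloneqq A - \tilde A$, which has $\operatorname{rank}(L)\le \epsilon_1 n$. Since $B - L = \tilde A - \tfrac{n}{\epsilon}I$, the matrices $B$ and $L$ disagree on at most $\operatorname{nnz}(\tilde A) + n$ entries. By~\cite{PR94} any matrix of rank $\le \epsilon_1 n$ disagrees with $B$ in at least $\epsilon_2 n^2$ entries, so $\operatorname{nnz}(\tilde A) \ge \epsilon_2 n^2 - n = \Omega(n^2)$. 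No shrinking of $\epsilon_1$ is required. The paper's own proof is exactly this, compressed into the single clause ``by~\cite{PR94}''.
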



\appendix

\section{Proof of Lemma~\ref{lem:spectral_to_schatten}}\label{appendix}

In this section, we prove Lemma~\ref{lem:spectral_to_schatten}.

\lemmaSpectralToSchatten*

\begin{proof}
Let $A'\in\R^{n\times n}$ be an $\epsilon$-spectral approximation of $A$, i.e., $-\epsilon A \preceq A'-A \preceq \epsilon A$.
Observe that the matrix $A'-A$ is symmetric.
Let the eigendecomposition of $A'-A$ be $UDU^\top$, including zero eigenvalues so that $U\in\R^{n\times n}$ is unitary.
Denote the $i$-th column of $U$ by $u_i$ (which is a normalized eigenvector).
Then
\begin{align*}
  \|A'-A\|_{S_p}^p 
   &= \sum_i |u_i^\top (A'-A)u_i|^p
    \le \sum_i |u_i^\top (\epsilon A)u_i|^p 
    = \epsilon^p \sum_i (u_i^\top A u_i)^p 
\\
   &= \epsilon^p \|\diag(U^\top A U)\|_{S_p}^p
   \leq \epsilon^p \| U^\top A U \|_{S_p}^p
   = \epsilon^p \|A\|_{S_p}^p,
\end{align*}
where $\diag(U^\top A U)$ is a diagonal matrix with the same diagonal as $U^\top A U$ (and zeros otherwise),
and the last inequality holds by Lemma~\ref{lem:pinching_bhatia} (pinching inequality \cite{DBLP:journals/tamm/Bhatia00}).
\end{proof}


\bibliographystyle{alphaurl}

\bibliography{references.bib}

\begin{thebibliography}{CSPW09}

\bibitem[AHK06]{arora2006fast}
Sanjeev Arora, Elad Hazan, and Satyen Kale.
\newblock A fast random sampling algorithm for sparsifying matrices.
\newblock In {\em Approximation, Randomization, and Combinatorial Optimization. Algorithms and Techniques}, pages 272--279. Springer, 2006.
\newblock \href {https://doi.org/10.1007/11830924\_26} {\path{doi:10.1007/11830924\_26}}.

\bibitem[AKL13]{achlioptas2013near}
Dimitris Achlioptas, Zohar~S. Karnin, and Edo Liberty.
\newblock Near-optimal entrywise sampling for data matrices.
\newblock In {\em Advances in Neural Information Processing Systems}, pages 1565--1573, 2013.
\newblock URL: \url{https://proceedings.neurips.cc/paper/2013/hash/6e0721b2c6977135b916ef286bcb49ec-Abstract.html}.

\bibitem[AM07]{achlioptas2007fast}
Dimitris Achlioptas and Frank McSherry.
\newblock Fast computation of low-rank matrix approximations.
\newblock {\em Journal of the ACM (JACM)}, 54(2):9--es, 2007.
\newblock \href {https://doi.org/10.1145/1219092.1219097} {\path{doi:10.1145/1219092.1219097}}.

\bibitem[BCW22]{DBLP:conf/stoc/BakshiCW22}
Ainesh Bakshi, Kenneth~L. Clarkson, and David~P. Woodruff.
\newblock Low-rank approximation with \emph{1/{\(\epsilon\)}\({}^{\mbox{1/3}}\)} matrix-vector products.
\newblock In {\em 54th Annual {ACM} {SIGACT} Symposium on Theory of Computing}, pages 1130--1143, 2022.
\newblock \href {https://doi.org/10.1145/3519935.3519988} {\path{doi:10.1145/3519935.3519988}}.

\bibitem[Bha97]{bhatia1997matrix}
Rajendra Bhatia.
\newblock {\em Matrix analysis}, volume 169 of {\em Graduate Texts in Mathematics}.
\newblock Springer-Verlag, New York, 1997.
\newblock \href {https://doi.org/10.1007/978-1-4612-0653-8} {\path{doi:10.1007/978-1-4612-0653-8}}.

\bibitem[Bha00]{DBLP:journals/tamm/Bhatia00}
Rajendra Bhatia.
\newblock Pinching, trimming, truncating, and averaging of matrices.
\newblock {\em Am. Math. Mon.}, 107(7):602--608, 2000.
\newblock URL: \url{http://www.jstor.org/stable/2589115}.

\bibitem[BKKS21]{DBLP:conf/colt/BravermanK0S21}
Vladimir Braverman, Robert Krauthgamer, Aditya Krishnan, and Shay Sapir.
\newblock Near-optimal entrywise sampling of numerically sparse matrices.
\newblock In {\em Conference on Learning Theory, {COLT}}, volume 134 of {\em Proceedings of Machine Learning Research}, pages 759--773. {PMLR}, 2021.
\newblock URL: \url{http://proceedings.mlr.press/v134/braverman21b.html}.

\bibitem[BKL02]{bhatia2002pinchings}
Rajendra Bhatia, William Kahan, and Ren-Cang Li.
\newblock Pinchings and norms of scaled triangular matrices.
\newblock {\em Linear and Multilinear Algebra}, 50(1):15--21, 2002.

\bibitem[BSS12]{BatsonSS12}
Joshua Batson, Daniel~A. Spielman, and Nikhil Srivastava.
\newblock Twice-ramanujan sparsifiers.
\newblock {\em SIAM Journal on Computing}, 41(6):1704--1721, 2012.
\newblock \href {https://doi.org/10.1137/090772873} {\path{doi:10.1137/090772873}}.

\bibitem[CR12]{DBLP:journals/cacm/CandesR12}
Emmanuel~J. Cand{\`{e}}s and Benjamin Recht.
\newblock Exact matrix completion via convex optimization.
\newblock {\em Commun. {ACM}}, 55(6):111--119, 2012.
\newblock \href {https://doi.org/10.1145/2184319.2184343} {\path{doi:10.1145/2184319.2184343}}.

\bibitem[CSPW09]{CHANDRASEKARAN20091493}
Venkat Chandrasekaran, Sujay Sanghavi, Pablo~A. Parrilo, and Alan~S. Willsky.
\newblock Sparse and low-rank matrix decompositions.
\newblock {\em IFAC Proceedings Volumes}, 42(10):1493--1498, 2009.
\newblock 15th IFAC Symposium on System Identification.
\newblock \href {https://doi.org/10.3182/20090706-3-FR-2004.00249} {\path{doi:10.3182/20090706-3-FR-2004.00249}}.

\bibitem[CT10]{DBLP:journals/tit/CandesT10}
Emmanuel~J. Cand{\`{e}}s and Terence Tao.
\newblock The power of convex relaxation: near-optimal matrix completion.
\newblock {\em {IEEE} Trans. Inf. Theory}, 56(5):2053--2080, 2010.
\newblock \href {https://doi.org/10.1109/TIT.2010.2044061} {\path{doi:10.1109/TIT.2010.2044061}}.

\bibitem[DZ11]{drineas2011note}
Petros Drineas and Anastasios Zouzias.
\newblock A note on element-wise matrix sparsification via a matrix-valued {B}ernstein inequality.
\newblock {\em Information Processing Letters}, 111(8):385--389, 2011.
\newblock \href {https://doi.org/10.1016/j.ipl.2011.01.010} {\path{doi:10.1016/j.ipl.2011.01.010}}.

\bibitem[GS18]{gupta2018exploiting}
Neha Gupta and Aaron Sidford.
\newblock Exploiting numerical sparsity for efficient learning: faster eigenvector computation and regression.
\newblock In {\em Advances in Neural Information Processing Systems}, pages 5269--5278, 2018.
\newblock URL: \url{https://proceedings.neurips.cc/paper/2018/hash/4a1590df1d5968d41b855005bb8b67bf-Abstract.html}.

\bibitem[GT09]{gittens2009error}
Alex Gittens and Joel~A Tropp.
\newblock Error bounds for random matrix approximation schemes.
\newblock 2009.
\newblock \href {https://arxiv.org/abs/0911.4108} {\path{arXiv:0911.4108}}.

\bibitem[KD14]{kundu2014note}
Abhisek Kundu and Petros Drineas.
\newblock A note on randomized element-wise matrix sparsification.
\newblock 2014.
\newblock \href {https://arxiv.org/abs/1404.0320} {\path{arXiv:1404.0320}}.

\bibitem[KDM17]{kundu2017recovering}
Abhisek Kundu, Petros Drineas, and Malik {Magdon-Ismail}.
\newblock Recovering {PCA} and sparse {PCA} via hybrid-($l_1$, $l_2$) sparse sampling of data elements.
\newblock {\em The Journal of Machine Learning Research}, 18(75):1--34, 2017.
\newblock URL: \url{http://jmlr.org/papers/v18/16-258.html}.

\bibitem[KO19]{khetan2019spectrum}
Ashish Khetan and Sewoong Oh.
\newblock Spectrum estimation from a few entries.
\newblock {\em The Journal of Machine Learning Research}, 20(21):1--55, 2019.
\newblock URL: \url{http://jmlr.org/papers/v20/18-027.html}.

\bibitem[KV17]{kong2017spectrum}
Weihao Kong and Gregory Valiant.
\newblock {Spectrum estimation from samples}.
\newblock {\em The Annals of Statistics}, 45(5):2218 -- 2247, 2017.
\newblock \href {https://doi.org/10.1214/16-AOS1525} {\path{doi:10.1214/16-AOS1525}}.

\bibitem[LS18]{LeeSun18}
Yin~Tat Lee and He~Sun.
\newblock Constructing linear-sized spectral sparsification in almost-linear time.
\newblock {\em SIAM Journal on Computing}, 47(6):2315--2336, 2018.
\newblock \href {https://doi.org/10.1137/16M1061850} {\path{doi:10.1137/16M1061850}}.

\bibitem[LW20]{DBLP:conf/icml/LiW20}
Yi~Li and David~P. Woodruff.
\newblock Input-sparsity low rank approximation in schatten norm.
\newblock In {\em Proceedings of the 37th International Conference on Machine Learning, {ICML}}, volume 119 of {\em Proceedings of Machine Learning Research}, pages 6001--6009, 2020.
\newblock URL: \url{http://proceedings.mlr.press/v119/li20q.html}.

\bibitem[NDT15]{nguyen2015tensor}
Nam~H. Nguyen, Petros Drineas, and Trac~D. Tran.
\newblock Tensor sparsification via a bound on the spectral norm of random tensors.
\newblock {\em Information and Inference: A Journal of the IMA}, 4(3):195--229, 2015.
\newblock \href {https://doi.org/10.1093/imaiai/iav004} {\path{doi:10.1093/imaiai/iav004}}.

\bibitem[NHD12]{DBLP:conf/aaai/NieHD12}
Feiping Nie, Heng Huang, and Chris H.~Q. Ding.
\newblock Low-rank matrix recovery via efficient schatten p-norm minimization.
\newblock In {\em Proceedings of the Twenty-Sixth {AAAI} Conference on Artificial Intelligence}, 2012.
\newblock URL: \url{http://www.aaai.org/ocs/index.php/AAAI/AAAI12/paper/view/5165}.

\bibitem[PR94]{PR94}
P.~Pudl\'{a}k and V.~R\"{o}dl.
\newblock Some combinatorial-algebraic problems from complexity theory.
\newblock {\em Discrete Math.}, 136(1–3):253–279, 1994.
\newblock \href {https://doi.org/10.1016/0012-365X(94)00115-Y} {\path{doi:10.1016/0012-365X(94)00115-Y}}.

\bibitem[Rot67]{rotfeld1967remarks}
S~Yu Rotfel'd.
\newblock Remarks on the singular numbers of a sum of completely continuous operators.
\newblock {\em Functional Analysis and Its Applications}, 1(3):95--96, 1967.
\newblock \href {https://doi.org/10.1007/BF01076915} {\path{doi:10.1007/BF01076915}}.

\bibitem[ST11]{DBLP:journals/siamcomp/SpielmanT11}
Daniel~A. Spielman and Shang{-}Hua Teng.
\newblock Spectral sparsification of graphs.
\newblock {\em {SIAM} J. Comput.}, 40(4):981--1025, 2011.
\newblock \href {https://doi.org/10.1137/08074489X} {\path{doi:10.1137/08074489X}}.

\bibitem[Tho76]{thompson1976convex}
Robert Thompson.
\newblock Convex and concave functions of singular values of matrix sums.
\newblock {\em Pacific Journal of Mathematics}, 66(1):285--290, 1976.
\newblock \href {https://doi.org/10.2140/pjm.1976.66.285} {\path{doi:10.2140/pjm.1976.66.285}}.

\bibitem[Val77]{DBLP:conf/mfcs/Valiant77}
Leslie~G. Valiant.
\newblock Graph-theoretic arguments in low-level complexity.
\newblock In {\em Mathematical Foundations of Computer Science, 6th Symposium}, volume~53 of {\em Lecture Notes in Computer Science}, pages 162--176. Springer, 1977.
\newblock \href {https://doi.org/10.1007/3-540-08353-7\_135} {\path{doi:10.1007/3-540-08353-7\_135}}.

\end{thebibliography}

\end{document}